\documentclass[11pt]{amsart}

\usepackage[T1]{fontenc}
\usepackage{lmodern}
\usepackage{amsmath,amssymb,amsthm,mathtools}
\usepackage[margin=1.05in]{geometry}
\usepackage{microtype}
\usepackage{xcolor}
\usepackage[colorlinks=true,linkcolor=blue!55!black,citecolor=blue!55!black,urlcolor=blue!60!black]{hyperref}

\newtheorem{theorem}{Theorem}[section]
\newtheorem{proposition}[theorem]{Proposition}
\newtheorem{lemma}[theorem]{Lemma}
\newtheorem{corollary}[theorem]{Corollary}
\numberwithin{equation}{section}

\newcommand{\R}{\mathbb R}
\newcommand{\cQ}{\mathcal Q}
\newcommand{\cB}{\mathcal B}
\newcommand{\cR}{\mathcal R}
\newcommand{\pBMO}{\operatorname{pBMO}}

\newcommand{\one}{\mathbf 1}
\newcommand{\avg}[2]{\left\langle #1\right\rangle_{#2}}
\newcommand{\norm}[2]{\left\|#1\right\|_{#2}}

\title[Beyond exponential growth]
{Uniform bounds for a thermo-diffusive system beyond exponential growth}
\author{Weinan Wang}
\address{Department of Mathematics, University of Oklahoma,
Norman, Oklahoma 73019, USA}
\email{ww@ou.edu}
\subjclass[2020]{35K57, 35B45, 42B20}
\keywords{thermo-diffusive system, reaction--diffusion equation,
uniform bound, superexponential growth, parabolic BMO}
\date{}
\hypersetup{
 pdftitle={Uniform bounds for a thermo-diffusive system beyond exponential growth},
 pdfauthor={Weinan Wang},
 pdfsubject={Uniform bounds for thermo-diffusive reactions beyond exponential growth},
 pdfkeywords={thermo-diffusive system, reaction-diffusion equation,
 uniform bound, superexponential growth, parabolic BMO}
}

\begin{document}

\begin{abstract}
In this paper, we prove a global-in-time uniform bound for solutions to a thermo-diffusive system in the whole space with bounded nonnegative initial data. The result allows exponential reaction rates and a range of superexponential rates determined by the diffusivities. This extends the recent results of La, Roquejoffre, and Ryzhik \cite{LRR24} and of Leontev and Ryzhik \cite{LR26} beyond their strictly subexponential range.
\end{abstract}

\maketitle

\section{Introduction and main result}

In this paper, we are interested in the thermo-diffusive system
\begin{equation}\label{eq:system}
 \begin{split}
 u_t&=\nu\Delta u-q,\\
 v_t&=\kappa\Delta v+q,\qquad q=u g(v),
 \end{split}
 \qquad (t,x)\in(0,\infty)\times\R^d,
\end{equation}
where $d\ge1$, $\nu,\kappa>0$, and $u,v\ge0$.  In the combustion
interpretation, $u$ is the fuel concentration, $v$ is the temperature,
and $g(v)$ is the temperature-dependent reaction rate.  We assume that
the initial data are bounded:
\begin{equation}\label{eq:data}
        0\le u_0,v_0\le K,\qquad K\ge1,
\end{equation}
and impose the basic assumptions
\begin{equation}\label{eq:gassumptions}
 g\in C^1([0,\infty)),\qquad g\ge0,
 \qquad \liminf_{s\to\infty}g(s)>0.
\end{equation}
No vanishing condition at the origin is imposed on $g$.

The reaction term in \eqref{eq:system} transfers mass from the fuel
equation to the temperature equation.  The comparison principle gives
\[
                         0\le u(t,x)\le K,
\]
but the same argument does not control $v$.  This distinction is
especially important on $\R^d$: bounded initial fuel need not have finite
total mass, so fuel may be available at arbitrarily large distances, and
the whole-space Laplacian has no spectral gap.  A time-uniform temperature
bound therefore requires more than the pointwise fuel estimate or the
formal balance between the two equations. Related conservative-transfer systems with unequal diffusivities also occur
in reaction--diffusion epidemic models \cite{PW23}. 

The diffusivity ordering separates the two regimes.  Let
\[
 G_a(t,x)=\one_{\{t>0\}}(4\pi at)^{-d/2}
             \exp\!\left(-\frac{|x|^2}{4at}\right),
 \qquad S_a(t)f=G_a(t,\cdot)*f,
\]
and use convolution in space and time.  Duhamel's formula gives the two
potentials
\begin{equation}\label{eq:potentials}
 F:=G_\nu*q=S_\nu(t)u_0-u,\qquad
 H:=G_\kappa*q=v-S_\kappa(t)v_0.
\end{equation}
Consequently, we have
\begin{equation}\label{eq:Fbound}
 0\le u\le K,\qquad 0\le F\le K,
 \qquad 0\le H\le v\le K+H.
\end{equation}
If $\kappa=\nu$, then $u+v$ solves the heat equation.  More generally, if
$\kappa\le\nu$, then we see
\begin{equation}\label{eq:introkernelcomparison}
 G_\kappa(t,x)
 \le\left(\frac{\nu}{\kappa}\right)^{d/2}G_\nu(t,x),
\end{equation}
and hence we have
\[
 H\le\left(\frac{\nu}{\kappa}\right)^{d/2}F
 \le\left(\frac{\nu}{\kappa}\right)^{d/2}K.
\]
This heat-kernel comparison is the Martin--Pierre argument
\cite{MP92}.  In the opposite regime $0<\nu<\kappa$,
\begin{equation}\label{eq:introkernelratio}
 \frac{G_\kappa(t,x)}{G_\nu(t,x)}
 =\left(\frac{\nu}{\kappa}\right)^{d/2}
   \exp\!\left(
     \frac{(\kappa-\nu)|x|^2}{4\kappa\nu t}\right),
\end{equation}
which is unbounded as $|x|^2/t\to\infty$.  Thus the bounded potential
$F$ gives no direct pointwise bound for $H$ when the temperature diffuses
faster than the fuel.

Balance-law and duality methods for reaction--diffusion systems on
bounded domains were developed in \cite{HMP87}.  The whole-space problem
with bounded, nondecaying initial data has a different large-time
character.  For polynomial reactions, Collet and Xin obtained the
asymptotic estimate
\[
                       \|v(t)\|_{L^\infty}=O(\log\log t)
                       \qquad (t\to\infty)
\]
in the difficult diffusivity regime \cite{CX96}.  For reaction rates of
exponential size, Herrero, Lacey, and Vel\'azquez proved global existence
for bounded initial data for a broad class containing
\[
                              q=u e^v,
\]
which arises in the Frank--Kamenetskii approximation of Arrhenius-type
reactions \cite{HLV98}.  Their theorem establishes global existence but
does not state a bound uniform as $t\to\infty$.

La, Roquejoffre, and Ryzhik  \cite{LRR24} subsequently established a time-uniform bound
under the subexponential condition
\[
                 g(s)\le c_2\exp(Zs^\rho),\qquad 0\le\rho<1,
\]
together with $g(s)>0$ for $s>0$, the nondegeneracy condition $g'(0)>0$,
and a uniform positive lower bound on $u_0+v_0$.  Leontev and
Ryzhik removed the strict-positivity, nondegeneracy, and initial-density
assumptions and included ignition reactions, while retaining $0<\rho<1$
and the condition $g(0)=0$ \cite{LR26}.  The hypotheses
\eqref{eq:gassumptions} do not impose the latter condition.  Both results
therefore stop at strictly subexponential growth.

The present paper addresses the exponential endpoint.  When
$0<\nu<\kappa$, set
\begin{equation}\label{eq:limitingpower}
                         \rho_*:=\frac{\kappa}{\kappa-\nu}
                         =1+\frac{\nu}{\kappa-\nu}>1.
\end{equation}

\begin{theorem}[Uniform bound beyond exponential growth]\label{thm:main}
Let $d\ge1$, let $0<\nu<\kappa$, and assume
\eqref{eq:data}--\eqref{eq:gassumptions}.  Suppose that there are
$c_2,Z>0$ and $0<\rho<\rho_*$ such that
\begin{equation}\label{eq:growthassumption}
             g(s)\le c_2\exp(Zs^\rho),\qquad s\ge0.
\end{equation}
Then \eqref{eq:system} has a unique nonnegative global mild solution in the
class of mild solutions bounded on compact time intervals.  This solution is
classical for positive time, and there is a finite constant $C>0$ such that
\[
                 \sup_{t\ge0,\,x\in\R^d}v(t,x)\le C.
\]
If $\vartheta>0$ is chosen so that
\[
 h_\vartheta:=\inf_{s>\vartheta}g(s)>0,
\]
then $C$ may be chosen to depend only on
$d,\nu,\kappa,K,c_2,Z,\rho,\vartheta$, and $h_\vartheta$.
\end{theorem}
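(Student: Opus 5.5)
The plan has two layers. First comes local well-posedness and a continuation criterion; then an a priori bound on $H=G_\kappa*q$ that is uniform in time and depends only on the listed constants.

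\emph{Local theory.} Since $g\in C^1$, the nonlinearity $ug(v)$ is locally Lipschitz on bounded sets. A Banach fixed point for the mild formulation in $L^\infty$ then gives a unique local mild solution. Nonnegativity and $u\le K$ follow from the comparison principle, and parabolic smoothing makes the solution classical for $t>0$. The solution can be continued as long as $\|v(t)\|_{L^\infty}$ stays bounded. Uniqueness holds in the class of mild solutions bounded on compact time intervals, by a Gronwall argument. So everything reduces to proving $\sup H\le C$ on the maximal interval, with $C$ independent of the existence time.

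\emph{Main estimate.} By \eqref{eq:Fbound} we know $F=G_\nu*q\le K$, so $q$ carries bounded $\nu$-potential. The task is to transfer this to a bound on the $\kappa$-potential despite the unbounded ratio \eqref{eq:introkernelratio}.

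Fix $(t,x)$ and split $H(t,x)$ into a parabolic near region $|y-x|^2\le M(t-s)$ and a far region. On the near region, \eqref{eq:introkernelratio} bounds $G_\kappa\le C e^{cM}G_\nu$, so that part is at most $Ce^{cM}K$. On the far region, $G_\kappa$ is Gaussian-small, but $q\le Kg(v)\le Kc_2e^{Zv^\rho}$ can be huge where $v$ is large.

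I would close this with a bootstrap on $\Lambda(T)=\sup_{[0,T]}v$. The far-region contribution is at most $K c_2 e^{Z\Lambda^\rho}$ times a Gaussian tail. The tail has to be taken over the region where the ratio is large, that is, weighted against $F$: heuristically, one uses Hölder/interpolation between the $\nu$-potential bound and the crude bound $q\le Kc_2e^{Z\Lambda^\rho}$. The exponent bookkeeping then gives an inequality roughly of the form
\[
 \Lambda\le K+ C e^{cM}+ C\,e^{Z\Lambda^\rho}\,e^{-M(\kappa-\nu)/\kappa\cdot(\cdots)} .
\]
Optimizing in $M$, with $M$ comparable to a power of $\Lambda$, is where the threshold $\rho<\rho_*=\kappa/(\kappa-\nu)$ should appear. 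The fraction of $\nu$-mass that the $\kappa$ kernel can amplify scales like $(\nu/\kappa)^{\cdots}$ raised to the power $(\kappa-\nu)/\kappa$, and this must beat $e^{Z\Lambda^\rho}$. The resulting inequality is sublinear in $\Lambda$, which forces $\Lambda\le C$.

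\emph{Role of $\vartheta$ and $h_\vartheta$.} Where $v>\vartheta$, the reaction rate is at least $h_\vartheta$, so fuel is consumed: $u$ decays like $e^{-h_\vartheta t}$ on regions where $v$ stays large. This gives a localized, time-integrated bound $\int\!\!\int_{\{v>\vartheta\}} u\lesssim K/h_\vartheta$ on parabolic cylinders. That is the replacement for the missing spectral gap. It is used to handle the contribution of hot regions over long time spans, so that the constant does not grow with $t$. I would phrase this as a parabolic-BMO or Carleson-type estimate: $q$ has bounded mass on every parabolic cylinder $Q_r$, with bound $\lesssim r^d$ uniformly in $r\ge1$. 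Summing dyadic annuli of the $\kappa$ kernel against these cylinder masses then yields the bound on $H$.

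The main obstacle is the far-region step: making the trade-off between the Gaussian gain $\exp(-(\kappa-\nu)|x|^2/(4\kappa\nu t))$ and the superexponential size $e^{Z v^\rho}$ sharp enough to reach every $\rho<\rho_*$. The first thing I would try is a level-set decomposition of $q$ according to the size of $v$, applied to each level separately, with the cylinder-mass bound handling the large-time part.
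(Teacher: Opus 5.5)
Your local theory and the reduction to an a priori bound are fine. The genuine gap is the far-region step, and it cannot be repaired within your framework.

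\textbf{Why the far-region bootstrap fails.} Any estimate of the far part through the size of $q$ fails for every $\rho>0$. Choose $M$ so that the near part $(\nu/\kappa)^{d/2}e^{(\kappa-\nu)M/(4\kappa\nu)}K$ is at most $\Lambda/4$. Then the Gaussian tail of $G_\kappa$ on $\{|z|^2>Ms\}$ is $e^{-M/(4\kappa)}\approx(\Lambda/K)^{-\nu/(\kappa-\nu)}$ up to logarithmic factors, which is only a power of $\Lambda$. Your inequality becomes
\[
 \Lambda\le\frac{\Lambda}{4}+C\,t\,Kc_2\,e^{Z\Lambda^\rho}\Bigl(\frac{\Lambda}{K}\Bigr)^{-\nu/(\kappa-\nu)},
\]
which never closes. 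Taking $M\gtrsim\Lambda^\rho$ instead makes the near term superexponential. Your bookkeeping does produce $\rho_*$, but as the admissible power $N$ in $g(s)\lesssim s^N$, not as the exponent inside $e^{Zs^\rho}$.

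Note also the factor $t$. By parabolic scaling, every unit of time contributes the same tail, so nothing here is uniform in time. H\"older interpolation against $F\le K$ does not help: it still leaves a positive power of $\sup q$ and of $t$.

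Your Carleson bound $\iint_{Q_r}q\lesssim Kr^d$ does hold at all scales. It follows directly from the $u$-equation, with no $h_\vartheta$ needed. But summed against the positive kernel $G_\kappa$, it gives a fixed amount per dyadic parabolic scale. That sum diverges logarithmically, both at small scales and like $\log t$ at large times. Positivity and mass bounds alone cannot give $L^\infty$ control of $H$.

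\textbf{The missing idea.} You need a growth-independent estimate for the far part, which the paper obtains through cancellation. Let $L_{m,\delta}$ be $G_\kappa$ cut off to the far cone $|x|^2>m^2t$. The paper writes
\[
 L_{m,\delta}*q=L_{m,\delta}*(\partial_t-\nu\Delta)F=a_{m,\delta}F+\mathcal T_{m,\delta}F,
\]
where $\mathcal T_{m,\delta}$ is a parabolic Calder\'on--Zygmund operator from $L^\infty$ to pBMO with norm $O(\delta^{-M}(1+m)^Pe^{-m^2/(4\kappa)})$. Since $0\le F\le K$, the far part has small pBMO norm whatever $g$ is.

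The threshold then comes from the distribution of $v$, not its supremum. On a cylinder with $\avg{v}{Q}\le A$, choose the aperture $m=m(\lambda)$ so that the near part equals $\lambda/4$. John--Nirenberg turns the gain $e^{-m^2/(4\kappa)}\approx\lambda^{-\nu/((1+\delta)(\kappa-\nu))}$ into
\[
 |\{v>\lambda\}\cap Q|/|Q|\lesssim\exp[-c\lambda^{\beta_\delta}/(\log\lambda)^{P/2}],\qquad \beta_\delta\uparrow\rho_*.
\]
The extra $1$ in $\beta_\delta$ is exactly what John--Nirenberg supplies and what your pointwise approach lacks. This yields $\avg{e^{pZv^\rho}}{Q}<\infty$, hence $L^p$ bounds on $q$, and then a pointwise bound from an interior parabolic estimate.

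That lifting requires bounded cylinder means. The paper obtains time-uniform means from the hot--cold iteration of Leontev--Ryzhik:
\begin{itemize}
\item On cylinders with large mean, John--Nirenberg for the global bound $\|v\|_{\pBMO}\le C_{\mathrm B}$ makes $\{v<\vartheta\}$ sparse.
\item The fuel-loss lemma, which is where $h_\vartheta$ enters, then gives $u(t,x)\le\frac12[S_\kappa(\tau)u(t-\tau)](x)$.
\item Combined with the smoothed Martin--Pierre estimate and telescoping over cold sets, this bounds $S_\kappa(\tau)v$ uniformly in time.
\end{itemize}
Your heuristic that $u$ decays like $e^{-h_\vartheta t}$ on hot regions ignores refueling by diffusion and cannot substitute for this argument.
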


In particular, Theorem~\ref{thm:main} includes the
Frank--Kamenetskii exponential law $g(s)=A e^{Zs}$ for every $A,Z>0$,
which lies outside the subexponential hypotheses of \cite{LRR24,LR26}.  The
proof overcomes the fixed-exponent obstruction in the John--Nirenberg lifting
step.  More
generally, the interval $1<\rho<\rho_*$ permits
superexponential reactions.  For $q=Aue^{Zv}$, the change of variables
$\widehat t=At$, $U=Zu$, and $V=Zv$ reduces the system to the normalized
Frank--Kamenetskii system treated in \cite{HLV98}.  For the continuous,
bounded, nonnegative initial data considered there, \cite{HLV98} proves
global existence; Theorem~\ref{thm:main} gives the corresponding
time-uniform temperature bound and extends global solvability to the full
range $0<\rho<\rho_*$ under \eqref{eq:gassumptions}.

The exponent $\rho_*$ is determined by the two Gaussian rates in the cone
decomposition.  The near-cone comparison incurs the factor
\[
 \exp\!\left(\frac{(1+\delta)(\kappa-\nu)m^2}{4\kappa\nu}\right),
\]
whereas the far-cone parabolic-BMO estimate gains
$\exp(-m^2/(4\kappa))$.  Balancing these rates gives
\[
 \beta_\delta=1+\frac{\nu}{(1+\delta)(\kappa-\nu)},
 \qquad \beta_\delta\uparrow\rho_*
 \quad\text{as }\delta\downarrow0.
\]
The dependence of $\rho_*$ on the diffusivity ratio is consistent with the
limiting regimes:
\[
 \rho_*\downarrow1\quad\text{as }\nu/\kappa\downarrow0,
 \qquad
 \rho_*\uparrow\infty\quad\text{as }\nu/\kappa\uparrow1.
\]
In the equal-diffusivity limit, the latter behavior agrees with the direct
heat-kernel comparison, which imposes no upper growth restriction on $g$.

The restriction $\rho<1$ in the preceding results enters at the local
lifting step.  The singular-integral relation between the two potentials
gives the growth-independent estimate
\[
                         \|H\|_{\pBMO}\le CK.
\]
By the parabolic John--Nirenberg inequality \cite{JN61,Aim88}, this controls
\[
 \avg{\exp\!\left(
   \lambda\left|H-\avg{H}{Q}\right|\right)}{Q}
\]
only while $\lambda\|H\|_{\pBMO}$ remains below a dimensional threshold.
If $0<\rho<1$, the inequality
\[
                       s^\rho\le\epsilon s+C_\epsilon
\]
reduces the required coefficient to an arbitrarily small one.  This
reduction is unavailable at and beyond $\rho=1$.

For $0<\nu<\kappa$, we split the temperature potential according to the
parabolic cone in which the kernel comparison
\eqref{eq:introkernelratio} is effective.  Given $0<\delta\le1$, the
cutoff changes across the thin annulus
\[
                m^2t<|x|^2<(1+\delta)m^2t.
\]
For every $m\ge2$ this gives
\begin{equation}\label{eq:introdecomposition}
 H=\cB_{m,\delta}+\cR_{m,\delta},
\end{equation}
where
\begin{equation}\label{eq:introdecompositionbounds}
 0\le\cB_{m,\delta}\le D_{m,\delta}K,\qquad
 0\le\cR_{m,\delta}\le H,
\end{equation}
with
\[
 D_{m,\delta}=\left(\frac{\nu}{\kappa}\right)^{d/2}
 \exp\!\left(\frac{(1+\delta)(\kappa-\nu)m^2}
                         {4\kappa\nu}\right)
\]
and
\begin{equation}\label{eq:introsmallbmo}
 \|\cR_{m,\delta}\|_{\pBMO}
 \le C\delta^{-M}(1+m)^P
              e^{-m^2/(4\kappa)}K.
\end{equation}
The near-cone potential is bounded by comparing the two heat kernels with
the finite factor $D_{m,\delta}$.  For the far-cone potential, the
kernel obtained by applying $\partial_t-\nu\Delta$ has parabolic
Calder\'on--Zygmund cancellation.  Tracking the Gaussian tail gives the
small seminorm in \eqref{eq:introsmallbmo}.

A fixed aperture already suffices at the exponential endpoint.  Indeed, for
any $0<\Lambda<\infty$, fix $\delta=1$ and choose $m$ so large that
\[
 \Lambda C(1+m)^P e^{-m^2/(4\kappa)}K
\]
lies below the John--Nirenberg threshold.  If $\avg{v}{Q}\le A$, then
$v\le K+D_{m,1}K+\cR_{m,1}$ and
$\avg{\cR_{m,1}}{Q}\le A$, whence
\begin{equation}\label{eq:introfixedaperture}
 \avg{e^{\Lambda v}}{Q}
 \le C_{\mathrm{JN}}
       \exp\!\left(\Lambda\{K+D_{m,1}K+A\}\right).
\end{equation}
Since $s^\rho\le1+s$ for $0<\rho<1$, a fixed aperture gives the range
$0<\rho\le1$.  To obtain superexponential moments, the cone aperture is
chosen as a function of the temperature level.
If $Q$ satisfies $\avg{v}{Q}\le A$, the John--Nirenberg inequality and
\eqref{eq:introdecompositionbounds}--\eqref{eq:introsmallbmo} yield
\begin{equation}\label{eq:introtail}
 \frac{|\{v>\lambda\}\cap Q|}{|Q|}
 \le C_\delta\exp\!\left[-c_{\delta,K}
 \frac{(\lambda/K)^{\beta_\delta}}
      {\{\log(e+\lambda/K)\}^{P/2}}\right],
 \qquad
 \beta_\delta=1+\frac{\nu}{(1+\delta)(\kappa-\nu)},
\end{equation}
for all sufficiently large $\lambda$, with constants independent of the
radius and location of $Q$.  Given $\rho<\rho_*$, one fixes $\delta>0$ so that
$\rho<\beta_\delta$.  The estimate \eqref{eq:introtail} then gives
\begin{equation}\label{eq:introarbitrarymoment}
 \avg{e^{\Lambda v^\rho}}{Q}
 \le E(d,\nu,\kappa,K,A,\rho,\Lambda)
 \qquad\text{for every }\Lambda<\infty.
\end{equation}
Taking $\Lambda=pZ$ with $p>(d+2)/2$ controls $v$ and $q$ in $L^p(Q)$.
An interior parabolic estimate lifts the average bound to a pointwise
bound for $v$.

It remains to obtain cylinder averages uniformly in time.  On a cold
cylinder, the local lifting estimate applies directly.  On a hot
cylinder, the fuel-loss estimate of \cite{LR26} contracts the fuel over a
fixed time step.  The smoothed Martin--Pierre estimate bounds the
accumulated heat source during that step by a heat average of the earlier
fuel concentration.  The discrete hot--cold expansion introduced in
\cite[Section~3.2]{LR26}
telescopes the cold contributions and contracts successive hot
contributions.  It follows that $S_\kappa(\tau)v(t)$ is uniformly
bounded, and hence that the unit-cylinder averages of $v$ are uniformly
bounded.  Applying the local moment and parabolic estimates completes the
proof.

Section~2 proves the cone decomposition and its quantitative
parabolic-BMO estimate.  Section~3 derives the distribution estimate
\eqref{eq:introtail}, the local moments below $\rho_*$, and the associated
lifting estimate.  Section~4 combines that estimate with the hot--cold
iteration of \cite{LR26} to prove Theorem~\ref{thm:main}.

We close the introduction by recording the standard local theory and
continuation criterion used in the proof of Theorem~\ref{thm:main}.

\begin{lemma}[Local solvability and continuation]\label{lem:local}
Assume that $u_0,v_0\in L^\infty(\R^d)$ are nonnegative and that
$g\in C^1([0,\infty))$ is nonnegative.  Then there are
$T_{\max}\in(0,\infty]$ and a unique maximal nonnegative mild solution
$(u,v)$ on $[0,T_{\max})$; uniqueness is understood in the class of mild
solutions bounded on compact time intervals.  For every $T<T_{\max}$,
\[
 (u,v)\in L^\infty((0,T)\times\R^d)^2
 \cap C((0,T];\operatorname{BUC}(\R^d))^2,
\]
the Duhamel identities hold, and $(u,v)$ is classical for positive time.
Moreover,
\[
 0\le u(t,x)\le \|u_0\|_{L^\infty},
 \qquad 0<t<T_{\max},
\]
and the continuation alternative is
\begin{equation}\label{eq:continuation}
 T_{\max}<\infty
 \quad\Longrightarrow\quad
 \limsup_{t\uparrow T_{\max}}\|v(t)\|_{L^\infty}=\infty.
\end{equation}
\end{lemma}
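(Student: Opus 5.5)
The plan is a Banach fixed-point argument for the Duhamel system in $L^\infty$, with a truncation of $g$ so that the nonlinearity becomes globally Lipschitz. Set $R:=\|u_0\|_{L^\infty}+\|v_0\|_{L^\infty}+1$ and let $g_R(s):=g(\min\{\max\{s,0\},2R\})$; this is Lipschitz and bounded. For $T>0$ small, the map
\[
 \Phi(u,v)(t)=\Bigl(S_\nu(t)u_0-\int_0^tS_\nu(t-s)\,u^+g_R(v)\,ds,\;
 S_\kappa(t)v_0+\int_0^tS_\kappa(t-s)\,u^+g_R(v)\,ds\Bigr)
\]
will be a contraction on the ball of radius $2R$ in $L^\infty((0,T)\times\R^d)^2$. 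This uses only $\|S_a(t)\|_{L^\infty\to L^\infty}\le1$, and $T$ depends only on $R$ and on $\sup_{[0,2R]}(g+|g'|)$. A small-$T$ bound $\|v\|_\infty\le R+TC<2R$ removes the truncation, so the fixed point is a mild solution of \eqref{eq:system}.

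Nonnegativity and the bound $u\le\|u_0\|_\infty$ come next. The source $q=ug(v)$ is bounded, and $S_a(t)f\in\operatorname{BUC}$ for $t>0$ with a Hölder-continuous Duhamel term. Hence $(u,v)\in C((0,T];\operatorname{BUC})$, and standard bootstrapping with parabolic Schauder estimates (using $g\in C^1$) makes the solution classical for $t>0$. For the sign, $u$ solves the linear equation $u_t=\nu\Delta u-c(t,x)u$ with the bounded coefficient $c=g_R(v)\ge0$. The maximum principle for bounded solutions on $\R^d$ (Phragmén–Lindelöf type, or directly from the mild formulation $u=e^{-\int c}\cdots$ via Gronwall on $u^-$) then gives $0\le u\le\|u_0\|_\infty$. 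Since $q\ge0$, this also gives $v\ge S_\kappa v_0\ge0$. These facts justify the positive-part truncation a posteriori.

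Uniqueness in the class of mild solutions bounded on compact intervals follows from the same Gronwall estimate. Two such solutions on $[0,T]$ are bounded by some $M$, and $g$ is Lipschitz on $[0,M]$. Therefore $\|(u_1-u_2,v_1-v_2)(t)\|_\infty\le C_M\int_0^t\|(u_1-u_2,v_1-v_2)(s)\|_\infty ds$, which forces the two solutions to coincide. Gluing local solutions yields a maximal solution on $[0,T_{\max})$.

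For \eqref{eq:continuation}, suppose $T_{\max}<\infty$ and $\sup_{t<T_{\max}}\|v(t)\|_\infty=:M<\infty$. Since $u\le\|u_0\|_\infty$ always holds, the restart time from the first paragraph, which depends only on $\|u(t_0)\|_\infty+\|v(t_0)\|_\infty\le\|u_0\|_\infty+M$, is bounded below uniformly in $t_0$. Restarting at $t_0$ close to $T_{\max}$ therefore extends the solution past $T_{\max}$, a contradiction. The one delicate point is the maximum principle on the unbounded domain $\R^d$. I will handle it in the mild/Gronwall form rather than with the classical maximum principle, so that no decay at infinity is needed.
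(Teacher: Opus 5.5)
Your proposal is correct and follows essentially the same route as the paper. Both arguments use a contraction for the Duhamel map in $L^\infty((0,T)\times\R^d)^2$, with step length depending only on the $L^\infty$ size of the data and the local Lipschitz constant of $g$. Both then get positivity of $u$ from the linear equation with a bounded nonnegative potential, bootstrap regularity with Schauder estimates, and restart using $u\le\|u_0\|_{L^\infty}$.

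The differences are cosmetic. You truncate $g$ and insert $u^+$, whereas the paper extends $g$ to a locally Lipschitz function on $\R$, and you prove the sign by Gronwall rather than Feynman--Kac. Two small points need tightening. With $u^+$, the linear coefficient is really $g_R(v)\one_{\{u>0\}}$ rather than $g_R(v)$. Also, the Gronwall bound on the negative part only closes after the shift $w=e^{\lambda t}u$ with $\lambda\ge\sup g_R$, which gives the mild equation for $w$ a nonnegative zeroth-order coefficient.
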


\begin{proof}
Extend $g$ to a nonnegative locally Lipschitz function on $\R$.  On
$L^\infty((0,T)\times\R^d)^2$, consider the Duhamel map
\[
 \begin{split}
 \Phi_1(u,v)(t)
 &=S_\nu(t)u_0-\int_0^tS_\nu(t-s)\bigl[u(s)g(v(s))\bigr]\,ds,\\
 \Phi_2(u,v)(t)
 &=S_\kappa(t)v_0+\int_0^tS_\kappa(t-s)
       \bigl[u(s)g(v(s))\bigr]\,ds.
 \end{split}
\]
The heat semigroups are contractions on $L^\infty$, and
$(a,b)\mapsto ag(b)$ is locally Lipschitz.  On every fixed bounded ball,
$\Phi$ therefore maps the ball into itself and is a contraction when
$T>0$ is sufficiently small.  This gives a unique maximal bounded mild
solution.  On a compact subinterval of its lifespan, set
$a(t,x)=g(v(t,x))$.  The chosen extension of $g$ makes $a$ bounded,
measurable, and nonnegative.  The first component is the unique mild
solution of
\[
 u_t-\nu\Delta u+a(t,x)u=0.
\]
Positivity of the corresponding linear evolution, for example from the
Feynman--Kac formula, gives
\[
 0\le u(t)\le S_\nu(t)u_0\le\|u_0\|_{L^\infty}.
\]
The second Duhamel identity then gives
\[
 v(t)=S_\kappa(t)v_0+
 \int_0^tS_\kappa(t-s)[u(s)g(v(s))]\,ds\ge0.
\]

On such a subinterval the reaction term is bounded.  Interior
$W^{2,1}_p$ estimates first give H\"older continuity for every positive
time; since $g\in C^1$, the reaction term is then locally H\"older, and
Schauder regularity makes the solution classical.  The heat-semigroup
representation also gives continuity into $\operatorname{BUC}(\R^d)$
away from $t=0$.  Finally, the contraction
argument can be restarted at any positive time, with a lifespan depending
only on an $L^\infty$ bound for $(u,v)$ and on the Lipschitz constant of $g$
on the corresponding compact range.  Hence a finite maximal time is possible
only if $\|u(t)\|_{L^\infty}+\|v(t)\|_{L^\infty}$ is unbounded as
$t\uparrow T_{\max}$.  The bound for $u$ reduces this to
\eqref{eq:continuation}.
\end{proof}

\begin{proposition}[The easy diffusivity regime]\label{prop:easy}
Assume $0<\kappa\le\nu$, the data condition \eqref{eq:data}, and
$g\in C^1([0,\infty))$, $g\ge0$.  Then the maximal solution from
Lemma~\ref{lem:local} is global and
\[
 \sup_{t\ge0,\,x\in\R^d}v(t,x)
 \le K+\left(\frac{\nu}{\kappa}\right)^{d/2}K.
\]
\end{proposition}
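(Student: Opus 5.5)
The plan is to combine the local theory of Lemma~\ref{lem:local} with the Martin--Pierre heat-kernel comparison \eqref{eq:introkernelcomparison}, applied on every compact subinterval of the lifespan, and then close with the continuation alternative \eqref{eq:continuation}.

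First, fix $T<T_{\max}$. On $[0,T]$ the solution is bounded and nonnegative, $q=ug(v)\ge0$ is bounded, and the Duhamel identities hold. Hence the potentials $F=G_\nu*q$ and $H=G_\kappa*q$, with $q$ extended by zero for $t\le0$, are well defined. They satisfy \eqref{eq:potentials} on $(0,T]\times\R^d$, because the convolution in time only involves $q$ on $(0,t)$. From $u\ge0$ and $S_\nu(t)u_0\le K$ we get $0\le F\le K$. From $v_0\ge0$ and $S_\kappa(t)v_0\le K$ we get $v\le K+H$.

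Second, for $\kappa\le\nu$ the pointwise inequality \eqref{eq:introkernelcomparison} holds. The factor $(\nu/\kappa)^{d/2}\ge1$ multiplies the prefactor, and since $1/(4\kappa t)\ge1/(4\nu t)$ the Gaussian exponent of $G_\kappa$ is more negative than that of $G_\nu$. Integrating this inequality against the nonnegative source $q$ gives
\[
H\le\left(\frac{\nu}{\kappa}\right)^{d/2}F\le\left(\frac{\nu}{\kappa}\right)^{d/2}K,
\]
and therefore
\[
v\le K+\left(\frac{\nu}{\kappa}\right)^{d/2}K
\]
on $(0,T]\times\R^d$. At $t=0$ the bound is immediate from \eqref{eq:data}. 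The bound does not depend on $T$.

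Third, suppose $T_{\max}<\infty$. Then \eqref{eq:continuation} requires $\limsup_{t\uparrow T_{\max}}\|v(t)\|_{L^\infty}=\infty$. This contradicts the uniform bound, which holds for every $T<T_{\max}$ with the same constant. So $T_{\max}=\infty$, and the estimate holds for all $t\ge0$.

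No step is a serious obstacle. The only point needing care is justifying \eqref{eq:potentials} for the mild solution, including the interchange of integrals and the finiteness of $F$ and $H$. On each $[0,T]$ this is immediate because $q$ is bounded and the heat kernels are integrable in space and time over bounded intervals. Nonnegativity of $q$, which comes from Lemma~\ref{lem:local}, is what allows the pointwise kernel comparison to be integrated.
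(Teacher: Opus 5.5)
Your proposal is correct and follows the paper's proof: the Martin--Pierre comparison $G_\kappa\le(\nu/\kappa)^{d/2}G_\nu$ integrated against $q\ge0$ gives $H\le(\nu/\kappa)^{d/2}F\le(\nu/\kappa)^{d/2}K$, hence $v\le K+(\nu/\kappa)^{d/2}K$ uniformly on the lifespan. The continuation alternative \eqref{eq:continuation} then yields $T_{\max}=\infty$. Your extra care in working on each compact $[0,T]$ with $T<T_{\max}$ only makes explicit what the paper's phrase ``on the maximal lifespan'' leaves implicit.
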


\begin{proof}
On the maximal lifespan, \eqref{eq:introkernelcomparison} and the Duhamel
identities give
\[
 H=G_\kappa*q
 \le\left(\frac{\nu}{\kappa}\right)^{d/2}G_\nu*q
 \le\left(\frac{\nu}{\kappa}\right)^{d/2}K.
\]
Thus $v\le K+(\nu/\kappa)^{d/2}K$.  The continuation alternative in
Lemma~\ref{lem:local} gives global existence.
\end{proof}

\section{The far-cone small-BMO decomposition}

Throughout this section assume $0<\nu<\kappa$.  Let
$[0,T_{\max})$ be the maximal lifespan from Lemma~\ref{lem:local}, fix
$T<T_{\max}$, and set
\[
 q^{(T)}(t,x)=\one_{(0,T)}(t)u(t,x)g(v(t,x)),
 \qquad F^{(T)}=G_\nu*q^{(T)},
 \qquad H^{(T)}=G_\kappa*q^{(T)}.
\]
Also define
\[
 u^{(T)}(t)=S_\nu(t)u_0-F^{(T)}(t),
 \qquad
 v^{(T)}(t)=S_\kappa(t)v_0+H^{(T)}(t).
\]
For $0\le t\le T$, the Duhamel identities give
\[
 u^{(T)}(t)=u(t),\qquad v^{(T)}(t)=v(t).
\]
For $t\ge T$, the semigroup property gives
\[
 \begin{split}
 F^{(T)}(t)
 &=S_\nu(t-T)F^{(T)}(T)
   =S_\nu(t)u_0-S_\nu(t-T)u(T),\\
 u^{(T)}(t)&=S_\nu(t-T)u(T),\\
 v^{(T)}(t)&=S_\kappa(t-T)v(T).
 \end{split}
\]
It follows that
\begin{equation}\label{eq:finitehorizonF}
 0\le F^{(T)}\le S_\nu(t)u_0\le K
 \qquad\text{on }(0,\infty)\times\R^d.
\end{equation}
Thus the whole-spacetime estimates below apply to the truncated source and
its potentials with constants independent of $T$.  On cylinders contained
in $(0,T)\times\R^d$, these quantities agree with the original solution and
$q^{(T)}=u^{(T)}g(v^{(T)})$.  We suppress the superscript $(T)$.

Write $\cQ=d+2$ for the parabolic homogeneous dimension.  For
$z_0=(t_0,x_0)$ and $r>0$, define
\[
 Q^-_r(z_0)=(t_0-r^2,t_0)\times B_r(x_0),
 \qquad
 Q_r(z_0)=(t_0-r^2,t_0+r^2)\times B_r(x_0),
 \qquad
 \avg{f}{Q}=\frac1{|Q|}\int_Qf.
\]
For a locally integrable function $f$, let
\[
 \|f\|_{\pBMO}
 :=\sup_Q\frac1{|Q|}\int_Q|f-\avg{f}{Q}|,
\]
where the supremum is taken over the symmetric cylinders $Q_r(z_0)$.
On $\R^{d+1}$, the seminorm defined using backward cylinders is equivalent,
with constants depending only on $d$.  We use this equivalence when
restricting the whole-spacetime estimates to the backward cylinders in
Section~3.  Each bounded potential used below is extended by zero to
negative times.

Fix once and for all $\psi\in C^\infty(\R)$ such that
\[
 0\le\psi\le1,\qquad \psi(s)=0\ (s\le0),\qquad
 \psi(s)=1\ (s\ge1).
\]
For $0<\delta\le1$, put
\[
 \chi_\delta(s)=\psi\!\left(\frac{s-1}{\delta}\right).
\]
Thus $\chi_\delta=0$ on $(-\infty,1]$, $\chi_\delta=1$ on
$[1+\delta,\infty)$, and
$\|\chi_\delta^{(j)}\|_\infty\le C_j\delta^{-j}$.  For $m\ge2$ and
$t>0$, define
\begin{equation}\label{eq:cutoff}
 \chi_{m,\delta}(t,x)
   =\chi_\delta\!\left(\frac{|x|^2}{m^2t}\right),
 \qquad
 L_{m,\delta}(t,x)=G_\kappa(t,x)\chi_{m,\delta}(t,x),
\end{equation}
and set $L_{m,\delta}=0$ for $t\le0$.  Split
\begin{equation}\label{eq:split}
 H=\cB_{m,\delta}+\cR_{m,\delta},\qquad
 \cB_{m,\delta}=(G_\kappa-L_{m,\delta})*q,
 \qquad \cR_{m,\delta}=L_{m,\delta}*q.
\end{equation}

\begin{proposition}[Quantitative cone decomposition]
\label{prop:decomp}
There are constants $C>0$ and integers $M,P\ge1$, depending only on
$d,\nu$, and $\kappa$, such that, for every $0<\delta\le1$ and
$m\ge2$,
\begin{equation}\label{eq:decompbounds}
 \begin{aligned}
 0&\le \cB_{m,\delta}\le D_{m,\delta}K,
 &D_{m,\delta}
   &=\left(\frac{\nu}{\kappa}\right)^{d/2}
     \exp\!\left(\frac{(1+\delta)(\kappa-\nu)m^2}
                            {4\kappa\nu}\right),\\
 0&\le \cR_{m,\delta}\le H,
 &\norm{\cR_{m,\delta}}{\pBMO}
   &\le C\delta^{-M}(1+m)^P
                    e^{-m^2/(4\kappa)}K.
 \end{aligned}
\end{equation}
\end{proposition}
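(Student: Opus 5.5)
The plan is to treat the two pieces separately: a pointwise kernel comparison on the near cone, and a Calder\'on--Zygmund argument on the far cone.

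\emph{Pointwise bounds.} Since $0\le\chi_{m,\delta}\le1$, both $L_{m,\delta}$ and $G_\kappa-L_{m,\delta}$ are nonnegative kernels. Together with $q\ge0$ this gives $\cB_{m,\delta},\cR_{m,\delta}\ge0$ and $\cR_{m,\delta}\le H$. On the support of $G_\kappa-L_{m,\delta}$ we have $|x|^2\le(1+\delta)m^2t$. There \eqref{eq:introkernelratio} gives
\[
 G_\kappa-L_{m,\delta}\le G_\kappa\le D_{m,\delta}\,G_\nu .
\]
Hence $\cB_{m,\delta}\le D_{m,\delta}F\le D_{m,\delta}K$ by \eqref{eq:finitehorizonF}.

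\emph{BMO bound for $\cR_{m,\delta}$.} The idea is to write $\cR_{m,\delta}=T(q)$ and transfer it onto $F$. Formally $q=(\partial_t-\nu\Delta)F$ in the distributional sense, with $F$ extended by zero for $t<0$; the initial contribution $S_\nu(t)u_0$ has been removed by the definition of $F$. Moving the operator onto the kernel gives
\[
 \cR_{m,\delta}=k*F,\qquad k:=(\partial_t-\nu\Delta)L_{m,\delta}.
\]
Justifying this identity needs some care, since $L_{m,\delta}$ is smooth away from the origin and vanishes near $t=0$ off a cone. I would check it by testing against smooth $F$ and approximating, or directly via Duhamel.

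Since $L_{m,\delta}$ is supported where $|x|^2\ge m^2t$, it vanishes in a neighbourhood of every point $(0,x)$ with $x\ne0$. Moreover $L_{m,\delta}$ is smooth on $\R^{d+1}\setminus\{0\}$ and parabolically homogeneous of degree $-d$. Consequently $k$ is homogeneous of degree $-d-2=-\cQ$, smooth away from the origin, and possibly carries a delta at the origin. Because $G_\kappa$ is dominated by $e^{-m^2/(4\kappa)}$ on the support, $L_{m,\delta}$ is in fact bounded there, so I expect no delta.

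Compute
\[
 (\partial_t-\nu\Delta)(G_\kappa\chi)=\chi(\partial_t-\nu\Delta)G_\kappa+G_\kappa(\partial_t-\nu\Delta)\chi-2\nu\nabla G_\kappa\cdot\nabla\chi ,
\]
and note $(\partial_t-\nu\Delta)G_\kappa=(\kappa-\nu)\Delta G_\kappa$. Every term is $G_\kappa$ times a polynomial in $|x|/\sqrt t$ and in $\delta^{-1}$, restricted to $|x|^2\ge m^2t$. On the unit parabolic sphere this gives the pointwise bounds
\[
 |k|+|\nabla_{t,x}k|\cdot\rho \lesssim \delta^{-M}(1+m)^P e^{-m^2/(4\kappa)}\rho^{-\cQ},
\]
with $\rho$ the parabolic distance to the origin. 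Here one uses $e^{-s/(4\kappa)}s^{j}\le C(1+m)^{2j}e^{-m^2/(4\kappa)}$ for $s\ge m^2$.

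\emph{Cancellation.} The key point is that $k$ has mean zero on parabolic annuli, $\int_{\{1<\rho<2\}}k=0$. By homogeneity of degree $-\cQ$ this is equivalent to $k$ being a principal-value kernel with vanishing spherical mean, with no delta term. I would prove it by integrating $k=(\partial_t-\nu\Delta)L$ over $\{\epsilon<\rho<R\}$ and using the divergence theorem. The boundary fluxes of $L$ and $\nabla L$ on the two parabolic spheres are scale invariant, so they cancel exactly. This holds because $L\sim\rho^{-d}$, $\partial_t$ carries the weight $\rho^{2}$, and the boundary measure scales like $\rho^{\cQ-1}$.

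With the size bound, the smoothness bound and the cancellation in hand, standard parabolic Calder\'on--Zygmund theory (kernel estimates plus cancellation imply $L^\infty\to\pBMO$ boundedness) gives
\[
 \|k*F\|_{\pBMO}\lesssim \delta^{-M}(1+m)^Pe^{-m^2/(4\kappa)}\|F\|_{L^\infty}.
\]
Since $\|F\|_{L^\infty}\le K$, this is the stated bound.

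\emph{Main obstacle.} I expect the hardest step to be the rigorous $L^2$ boundedness, i.e.\ controlling the multiplier $\widehat k$ uniformly with the same quantitative constant. I would try the standard argument: homogeneity of degree $-\cQ$, smoothness and vanishing mean imply that $\widehat k$ is bounded by $C\sup_{\text{sphere}}(|k|+|\nabla k|)$. I would also double-check the distributional identity when $F$ is merely bounded, approximating by truncations in $\rho$.
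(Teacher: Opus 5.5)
Your overall structure matches the paper's proof. You compare kernels on the near cone, which you do correctly. On the far cone you treat $(\partial_t-\nu\Delta)L_{m,\delta}$ as a parabolic Calder\'on--Zygmund kernel of size $\varepsilon_{m,\delta}=\delta^{-M}(1+m)^Pe^{-m^2/(4\kappa)}$ acting on $F$, with $\|F\|_\infty\le K$.

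\textbf{The delta term.} One step is wrong: $(\partial_t-\nu\Delta)L_{m,\delta}$ \emph{does} carry a delta at the origin, so the identity $\cR_{m,\delta}=k*F$ is false as stated. Your justification fails because $L_{m,\delta}$ is not bounded. On its support $G_\kappa\le(4\pi\kappa t)^{-d/2}e^{-m^2/(4\kappa)}$, and the factor $t^{-d/2}$ remains; this agrees with your own remark that $L_{m,\delta}$ is homogeneous of degree $-d$.

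To see the delta, write $L_{m,\delta}(t,x)=t^{-d/2}\ell_{m,\delta}(x/\sqrt t)$ with $\ell_{m,\delta}(z)=(4\pi\kappa)^{-d/2}e^{-|z|^2/(4\kappa)}\chi_\delta(|z|^2/m^2)$. Then $\int_{\R^d}L_{m,\delta}(t,x)\,dx=a_{m,\delta}:=\int\ell_{m,\delta}>0$ for every $t>0$. Hence $L_{m,\delta}(t,\cdot)\to a_{m,\delta}\delta_0$ as $t\downarrow0$, and with the principal value taken over $\{t>\epsilon\}$,
\[
(\partial_t-\nu\Delta)L_{m,\delta}=a_{m,\delta}\,\delta_{(0,0)}+\mathrm{p.v.}\,k .
\]
If you truncate by a parabolic norm instead, the coefficient is the scale-invariant boundary flux from your own divergence-theorem argument. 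Scale invariance makes the fluxes on two spheres cancel, so the annular means vanish and the principal value exists. It also means the flux through a shrinking sphere is a constant independent of the radius, generally nonzero, and that constant is the delta coefficient. Vanishing mean on annuli does not exclude a delta.

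Relatedly, $L_{m,\delta}$ does not vanish near $(0,x)$ for $x\neq0$: there $\chi_{m,\delta}=1$ and $L_{m,\delta}=G_\kappa$. Smoothness across $\{t=0,\,x\neq0\}$ comes instead from the infinite-order vanishing of the Gaussian.

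The repair is what the paper does: $\cR_{m,\delta}=a_{m,\delta}F+\mathcal T_{m,\delta}F$. The same Gaussian tail bound gives $a_{m,\delta}\le C\varepsilon_{m,\delta}$, so $\|a_{m,\delta}F\|_{\pBMO}\le 2a_{m,\delta}K$ and the stated estimate survives.

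\textbf{Smaller points.} Your route to $L^2$ boundedness is fine quantitatively: a mean-zero parabolically homogeneous kernel with smooth angular part has a bounded multiplier, controlled linearly by the angular $C^1$ norm. The paper instead computes the multiplier $\int_0^\infty e^{-i\tau s}s^{-1}\widehat{k_{m,\delta}}(\sqrt s\,\xi)\,ds$ directly from $|\widehat{k_{m,\delta}}(\zeta)|\le C\varepsilon_{m,\delta}\min\{|\zeta|^2,(1+|\zeta|)^{-J}\}$.

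For the nondecaying $F$, the paper avoids a delicate identification by truncating the source, $q_R=q\one_{\{0<s<R,\,|y|<R\}}$, rather than truncating in $\rho$:
\begin{itemize}
\item $F_R$ lies in $L^2$ on finite time intervals, and causality identifies the convolution with the $L^2$ operator.
\item The identity $L_{m,\delta}*q_R=a_{m,\delta}F_R+\mathcal T_{m,\delta}F_R$ follows from the Fourier--Laplace transform.
\item The BMO bound is uniform in $R$ because $0\le F_R\le K$.
\item Monotone convergence $\cR_{m,\delta,R}\uparrow\cR_{m,\delta}$ passes to the limit.
\end{itemize}

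Finally, in your smoothness bound the time derivative should carry the weight $\rho^2$, not $\rho$: the correct bound is $|\partial_t k|\lesssim\varepsilon_{m,\delta}\,\rho^{-\cQ-2}$.
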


\begin{proof}
Positivity gives the decomposition and $0\le\cR_{m,\delta}\le H$.  On
the support of $1-\chi_{m,\delta}$ one has
$|x|^2/t\le(1+\delta)m^2$.  Since $\kappa>\nu$,
\[
 \frac{G_\kappa(t,x)}{G_\nu(t,x)}
 =\left(\frac{\nu}{\kappa}\right)^{d/2}
   \exp\!\left(\frac{(\kappa-\nu)|x|^2}
                       {4\kappa\nu t}\right)
 \le D_{m,\delta}.
\]
Therefore
$\cB_{m,\delta}\le D_{m,\delta}(G_\nu*q)\le D_{m,\delta}K$.

We next estimate the far component.  Set
\[
 \ell_{m,\delta}(z)
  =(4\pi\kappa)^{-d/2}e^{-|z|^2/(4\kappa)}
       \chi_\delta(|z|^2/m^2),
 \qquad
 a_{m,\delta}=\int_{\R^d}\ell_{m,\delta}(z)\,dz.
\]
Then
\[
 L_{m,\delta}(t,x)
  =\one_{\{t>0\}}t^{-d/2}\ell_{m,\delta}(x/\sqrt t),
 \qquad
 L_{m,\delta}(t,\cdot)\longrightarrow a_{m,\delta}\delta_0
 \quad(t\downarrow0).
\]
For $t>0$, direct differentiation gives
\begin{equation}\label{eq:km}
 (\partial_t-\nu\Delta)L_{m,\delta}(t,x)
   =t^{-\cQ/2}k_{m,\delta}(x/\sqrt t),
\end{equation}
where
\[
 k_{m,\delta}(z)
 =-\frac d2\ell_{m,\delta}(z)
  -\frac12z\cdot\nabla\ell_{m,\delta}(z)
  -\nu\Delta\ell_{m,\delta}(z).
\]
Integration by parts gives
\begin{equation}\label{eq:cancellation}
 \int_{\R^d}k_{m,\delta}(z)\,dz=0.
\end{equation}
Moreover, $k_{m,\delta}$ is even, and hence
\begin{equation}\label{eq:firstmoments}
 \int_{\R^d}z_jk_{m,\delta}(z)\,dz=0
 \qquad(1\le j\le d).
\end{equation}

The expression on the right of \eqref{eq:km} is not locally integrable at
$(0,0)$.  For $\varphi\in C_c^\infty(\R^{d+1})$, define its canonical
principal-value extension by
\begin{equation}\label{eq:pvdefinition}
 \left\langle\widetilde K_{m,\delta},\varphi\right\rangle
 :=\lim_{\epsilon\downarrow0}
   \int_\epsilon^\infty\!\int_{\R^d}
      t^{-\cQ/2}k_{m,\delta}(x/\sqrt t)
      \varphi(t,x)\,dx\,dt.
\end{equation}
To see that the limit exists, substitute $x=\sqrt t\,z$.  By
\eqref{eq:cancellation}--\eqref{eq:firstmoments}, one may replace
$\varphi(t,\sqrt t\,z)$ by
\[
 \varphi(t,\sqrt t\,z)-\varphi(t,0)
       -\sqrt t\,z\cdot\nabla_x\varphi(t,0),
\]
which is $O(t(1+|z|^2))$ near $t=0$.  An integration by parts in $t$ and
$x$ then gives, in $\mathcal D'(\R^{d+1})$,
\begin{equation}\label{eq:Kmprofile}
 K_{m,\delta}:=(\partial_t-\nu\Delta)L_{m,\delta}
 =a_{m,\delta}\delta_{(0,0)}+\widetilde K_{m,\delta}.
\end{equation}

Fix an integer $J\ge d+10$.  The product and chain rules, together with
the derivative bounds for $\chi_\delta$, give integers $M,P\ge1$ such
that, for $|\alpha|\le J$,
\[
 |\partial^\alpha k_{m,\delta}(z)|
 \le C\delta^{-M}(1+|z|)^P
       e^{-|z|^2/(4\kappa)}\one_{\{|z|\ge m\}}.
\]
For every fixed $L\ge0$, the elementary Gaussian-tail bound
\[
 \int_{|z|\ge m}(1+|z|)^L e^{-|z|^2/(4\kappa)}\,dz
 \le C_L(1+m)^{L+d}e^{-m^2/(4\kappa)},
 \qquad m\ge2,
\]
then yields, after enlarging $P$ if necessary,
\begin{equation}\label{eq:schwartzsmall}
 \begin{split}
 a_{m,\delta}
 &+\max_{|\alpha|\le J}\left\{
  \|(1+|z|)^J\partial^\alpha k_{m,\delta}\|_{L^1_z}
  +\|(1+|z|)^J\partial^\alpha k_{m,\delta}\|_{L^\infty_z}
 \right\}\\
 &\hspace{35mm}\le
 C\delta^{-M}(1+m)^Pe^{-m^2/(4\kappa)}
 =:C\varepsilon_{m,\delta}.
 \end{split}
\end{equation}
Taylor's formula at the origin, using
\eqref{eq:cancellation}--\eqref{eq:firstmoments}, and integration by parts
at high frequency yield
\[
 \widehat{k_{m,\delta}}(\zeta)
 =\frac12\zeta\cdot\nabla\widehat{\ell_{m,\delta}}(\zeta)
  +\nu|\zeta|^2\widehat{\ell_{m,\delta}}(\zeta),
\]
and
\begin{equation}\label{eq:fourierprofile}
 |\widehat{k_{m,\delta}}(\zeta)|
 \le C\varepsilon_{m,\delta}
       \min\{|\zeta|^2,(1+|\zeta|)^{-J}\}.
\end{equation}

The spacetime Fourier multiplier of the principal-value part is
\[
 M_{m,\delta}(\tau,\xi)
 =\int_0^\infty e^{-i\tau s}s^{-1}
       \widehat{k_{m,\delta}}(\sqrt s\,\xi)\,ds.
\]
For $\xi\ne0$, the change of variables $a=s|\xi|^2$ and
\eqref{eq:fourierprofile} give
\[
 |M_{m,\delta}(\tau,\xi)|
 \le C\varepsilon_{m,\delta}\left(
       \int_0^1da+\int_1^\infty a^{-1-J/2}\,da\right)
 \le C\varepsilon_{m,\delta};
\]
for $\xi=0$ the multiplier vanishes.  The truncated multipliers
\[
 M_{m,\delta}^{\epsilon,R}(\tau,\xi)
 =\int_\epsilon^R e^{-i\tau s}s^{-1}
       \widehat{k_{m,\delta}}(\sqrt s\,\xi)\,ds
\]
obey the same bound uniformly in $0<\epsilon<R$, and converge pointwise as
$\epsilon\downarrow0$ and $R\uparrow\infty$.  Plancherel and dominated
convergence therefore define an $L^2$ operator $\mathcal T_{m,\delta}$
with norm at most $C\varepsilon_{m,\delta}$.

Away from the origin, extend the kernel in \eqref{eq:pvdefinition} by zero
to $t<0$.  It is smooth across $\{t=0,x\ne0\}$ because the Gaussian tail
vanishes there to infinite order.  Writing
$\varrho(t,x)=|x|+\sqrt{|t|}$, differentiation and
\eqref{eq:schwartzsmall} give the mixed-homogeneity estimates
\begin{equation}\label{eq:CZ}
 |\nabla_x^\alpha\partial_t^\beta
       \widetilde K_{m,\delta}(t,x)|
 \le C_{\alpha,\beta}\varepsilon_{m,\delta}
       \varrho(t,x)^{-\cQ-|\alpha|-2\beta}
\end{equation}
for $|\alpha|\le1$ and $0\le\beta\le1$.  These are the standard
parabolic Calder\'on--Zygmund bounds; see \cite{FR66} and, for the
corresponding heat-potential argument, \cite[Lemma~2.2]{LRR24}.

We prove the $L^\infty\to\pBMO$ estimate by the standard local--far
decomposition.  First let $f\in L_c^\infty$.  Given a
symmetric parabolic cylinder $Q$ of radius $r$ and center $z_Q$, write
$f=f_1+f_2$, where $f_1=f\one_{4Q}$.  The multiplier estimate and
Cauchy--Schwarz give
\[
 \frac1{|Q|}\int_Q|\mathcal T_{m,\delta}f_1|
 \le C\varepsilon_{m,\delta}\|f\|_\infty.
\]
For $z\in Q$ and $w\notin4Q$, \eqref{eq:CZ} and the parabolic mean-value
estimate imply
\[
 |\widetilde K_{m,\delta}(z-w)
       -\widetilde K_{m,\delta}(z_Q-w)|
 \le C\varepsilon_{m,\delta}
       \frac{\varrho(z-z_Q)}{\varrho(z_Q-w)^{\cQ+1}}.
\]
Taking $c_Q=\mathcal T_{m,\delta}f_2(z_Q)$ and summing over parabolic
annuli gives
\[
 \frac1{|Q|}\int_Q|\mathcal T_{m,\delta}f-c_Q|
 \le C\varepsilon_{m,\delta}\|f\|_\infty.
\]
Oscillation about the average is at most twice the oscillation about any
constant.  Approximation by common expanding truncations consequently
extends $\mathcal T_{m,\delta}$ from $L^\infty_c$ to $L^\infty$, modulo
constants, and gives
\begin{equation}\label{eq:LinftoBMO}
 \|\mathcal T_{m,\delta}f\|_{\pBMO(\R^{d+1})}
 \le C\varepsilon_{m,\delta}\|f\|_{L^\infty(\R^{d+1})}.
\end{equation}

To apply this estimate to the bounded, nondecaying potential $F$, choose
increasing spacetime truncations
\[
 q_R=q\,\one_{\{0<s<R,\ |y|<R\}},\qquad
 F_R=G_\nu*q_R,\qquad
 \cR_{m,\delta,R}=L_{m,\delta}*q_R.
\]
Then $0\le F_R\le F\le K$.  On every finite interval $(0,T)$, the heat
semigroup contraction and Young's inequality in time give
\[
 \|F_R\|_{L^2((0,T)\times\R^d)}
 \le T\|q_R\|_{L^2((0,T)\times\R^d)}.
\]
Since the kernels are causal, the restriction of
$\widetilde K_{m,\delta}*F_R$ to $(0,T)$ is the $L^2$ realization of
$\mathcal T_{m,\delta}(F_R\one_{(0,T)})$ there.  Indeed, at a time
$0<t<T$ every causal truncation samples only $F_R$ on $(0,t)$; extending
$F_R\one_{(0,T)}$ by zero therefore leaves the truncated convolutions
unchanged, and the identity follows by passage to the $L^2$ limit.  The
spatial Fourier transform and the time Laplace transform give
\[
 \mathcal{FL}[K_{m,\delta}](\lambda,\xi)
 = (\lambda+\nu|\xi|^2)
       \mathcal{FL}[L_{m,\delta}](\lambda,\xi),
 \qquad
 \mathcal{FL}[F_R](\lambda,\xi)
 =\frac{\mathcal{FL}[q_R](\lambda,\xi)}
        {\lambda+\nu|\xi|^2}
\]
for $\operatorname{Re}\lambda>0$.  Consequently,
\begin{equation}\label{eq:ibp}
 \cR_{m,\delta,R}
 =L_{m,\delta}*q_R
 =a_{m,\delta}F_R+\mathcal T_{m,\delta}F_R
\end{equation}
in $\mathcal D'(\R^{d+1})$ and almost everywhere.  Extending $F_R$ by
zero to $t<0$, one has
$(\partial_t-\nu\Delta)F_R=q_R$ in distributions with no initial trace
measure; the term $a_{m,\delta}F_R$ in \eqref{eq:ibp} comes instead from
the trace of $L_{m,\delta}$ in \eqref{eq:Kmprofile}.

Equations \eqref{eq:schwartzsmall}, \eqref{eq:LinftoBMO}, and
\eqref{eq:ibp} imply
\[
 \|\cR_{m,\delta,R}\|_{\pBMO(\R^{d+1})}
 \le C\varepsilon_{m,\delta}K
\]
uniformly in $R$.  Finally,
$\cR_{m,\delta,R}\uparrow\cR_{m,\delta}$ pointwise.  Since
$0\le\cR_{m,\delta}\le H\le v$, monotone convergence gives local $L^1$
convergence and hence convergence of the mean oscillation on every fixed
cylinder.  This proves \eqref{eq:decompbounds}.
\end{proof}

\begin{corollary}[Growth-independent BMO bound]\label{cor:globalBMO}
There is a constant $C_{\mathrm B}=C(d,\nu,\kappa,K)$ such that
\begin{equation}\label{eq:globalBMO}
 \|v\|_{\pBMO((0,\infty)\times\R^d)}\le C_{\mathrm B}.
\end{equation}
\end{corollary}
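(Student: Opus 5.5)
We would prove the bound by writing $v=S_\kappa(t)v_0+H$ and applying Proposition~\ref{prop:decomp} with the fixed choices $\delta=1$ and $m=2$. This splits $v$ into three pieces: a bounded piece, a piece controlled in $\pBMO$, and a remainder. The pBMO seminorm is subadditive, and for a bounded function $f$ one has $\|f\|_{\pBMO}\le 2\|f\|_{L^\infty}$. Both points are immediate, since the oscillation about the average is at most twice the oscillation about any constant.

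The steps, in order:
\begin{enumerate}
\item Since $0\le S_\kappa(t)v_0\le K$, the free part contributes at most $2K$ to the seminorm.
\item From \eqref{eq:decompbounds} we have $0\le\cB_{2,1}\le D_{2,1}K$, so it contributes at most $2D_{2,1}K$. Here $D_{2,1}$ depends only on $d,\nu,\kappa$.
\item The far part satisfies $\|\cR_{2,1}\|_{\pBMO}\le C\,3^Pe^{-1/\kappa}K$ by the same proposition.
\end{enumerate}
Summing the three contributions gives
\[
 \|v\|_{\pBMO}\le 2K+2D_{2,1}K+C3^Pe^{-1/\kappa}K=:C_{\mathrm B},
\]
which depends only on $d,\nu,\kappa,K$. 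Any $m\ge2$ would work equally well; the bound is growth-independent because neither $D_{m,\delta}$ nor the $\pBMO$ constant involves $g$.

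The one point needing care is the domain and the finite-horizon truncation, and we expect this to be the main (if mild) obstacle. Section~2 works with the truncated objects $q^{(T)}$, $H^{(T)}$, $v^{(T)}$, extended by zero to negative times, on all of $\R^{d+1}$. The estimates there hold with constants independent of $T$, thanks to \eqref{eq:finitehorizonF}. We would therefore argue as follows.
\begin{itemize}
\item Apply the argument above to $v^{(T)}$ on $\R^{d+1}$, with $S_\kappa(t)v_0$ also extended by zero for $t<0$. It remains bounded by $K$ in absolute value, so step 1 is unaffected.
\item Take any cylinder $Q\subset(0,\infty)\times\R^d$ with $\sup\{t:(t,x)\in Q\}<T<T_{\max}$. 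On such a cylinder $v^{(T)}=v$, so its mean oscillation is bounded by $C_{\mathrm B}$.
\end{itemize}

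If $T_{\max}<\infty$, the seminorm in \eqref{eq:globalBMO} is understood on $(0,T_{\max})\times\R^d$, and the same bound holds there. Once global existence is proved, this is the full half-space. Cylinders that are not strictly inside the domain are handled by exhaustion: the mean oscillation over $Q\cap\{t<T\}$ converges as $T\uparrow T_{\max}$ by monotone convergence. Alternatively, if the seminorm is restricted to cylinders contained in the domain, nothing further is needed.
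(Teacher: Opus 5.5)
Your proposal is correct and follows the paper's proof: fix $m=2$ and $\delta=1$ in Proposition~\ref{prop:decomp}, bound the two bounded pieces $S_\kappa(t)v_0$ and $\cB_{2,1}$ in $\pBMO$ by twice their sup norms, and control $\cR_{2,1}$ by \eqref{eq:decompbounds}. Your extra bookkeeping about the finite-horizon truncation matches the paper's convention: it suppresses the superscript $(T)$ and applies the Section~2 estimates uniformly in $T$.
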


\begin{proof}
A growth-independent BMO estimate of this form was established in
\cite[Lemma~2.2]{LRR24}.  The present corollary records that
Proposition~\ref{prop:decomp} recovers it directly.  Indeed, fix $m=2$
and $\delta=1$ in Proposition~\ref{prop:decomp}.  The bounded
terms $S_\kappa(t)v_0$ and $\cB_{2,1}$ have BMO seminorms at most twice
their $L^\infty$ norms, while \eqref{eq:decompbounds} controls
$\cR_{2,1}$.  The identity
$v=S_\kappa(t)v_0+\cB_{2,1}+\cR_{2,1}$ proves the claim.
\end{proof}

\section{Local superexponential moments and pointwise lifting}

Continue to assume $0<\nu<\kappa$, and retain the finite-horizon notation
of Section~2.

\begin{proposition}[Local moments in the extended range]
\label{prop:expmoment}
Let $Q=Q^-_r(z_0)\subset(0,\infty)\times\R^d$ and suppose
\begin{equation}\label{eq:meanassume}
 \avg{v}{Q}\le A.
\end{equation}
For every $0<\rho<\rho_*$ and every $\Lambda>0$, there is a finite
constant $E=E(d,\nu,\kappa,K,A,\rho,\Lambda)$ such that
\begin{equation}\label{eq:expmoment}
 \avg{\exp(\Lambda v^\rho)}{Q}\le E.
\end{equation}
The constant is independent of the radius and location of $Q$.
\end{proposition}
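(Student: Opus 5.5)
The plan is to prove the distribution estimate \eqref{eq:introtail} on $Q$ and then integrate it against $\exp(\Lambda\lambda^\rho)$.

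Fix $\rho<\rho_*$ and choose $\delta\in(0,1]$ with $\rho<\beta_\delta$. This is possible because $\beta_\delta\uparrow\rho_*$ as $\delta\downarrow0$. On $Q$ (with the truncation $T$ beyond $t_0$) we have $v=S_\kappa(t)v_0+H\le K+\cB_{m,\delta}+\cR_{m,\delta}$. By Proposition~\ref{prop:decomp} this gives
\[
 v\le K+D_{m,\delta}K+\cR_{m,\delta},
\]
with $0\le\cR_{m,\delta}\le v$, so $\avg{\cR_{m,\delta}}{Q}\le A$. Moreover $\|\cR_{m,\delta}\|_{\pBMO}\le C\varepsilon_{m,\delta}K$, where $\varepsilon_{m,\delta}=\delta^{-M}(1+m)^Pe^{-m^2/(4\kappa)}$. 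The symmetric- and backward-cylinder seminorms are equivalent on $\R^{d+1}$, with $\cR$ extended by zero to negative times, so the same bound holds on $Q$.

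The parabolic John--Nirenberg inequality in distributional form then gives, for every $s>0$,
\[
 \frac{|\{\cR_{m,\delta}-\avg{\cR_{m,\delta}}{Q}>s\}\cap Q|}{|Q|}
 \le C_1\exp\!\left(-\frac{c_1 s}{\varepsilon_{m,\delta}K}\right).
\]
Given a level $\lambda$ large compared with $K$ and $A$, we let $m=m(\lambda)$ be the largest $m\ge2$ with $K+D_{m,\delta}K+A\le\lambda/2$. Explicitly,
\[
 m^2\approx\frac{4\kappa\nu}{(1+\delta)(\kappa-\nu)}\log\frac{\lambda}{CK}.
\]
On $\{v>\lambda\}$ we then have $\cR_{m,\delta}-\avg{\cR_{m,\delta}}{Q}>\lambda/2$, and hence
\[
 \frac{|\{v>\lambda\}\cap Q|}{|Q|}
 \le C_1\exp\!\left(-\frac{c_1\lambda}{2\varepsilon_{m,\delta}K}\right).
\]
Inserting the choice of $m$, the factor $e^{m^2/(4\kappa)}$ becomes $(\lambda/CK)^{\nu/((1+\delta)(\kappa-\nu))}$. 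The factor $(1+m)^{-P}$ becomes $\{\log(e+\lambda/K)\}^{-P/2}$. Together these give exactly \eqref{eq:introtail}, with exponent
\[
 1+\frac{\nu}{(1+\delta)(\kappa-\nu)}=\beta_\delta .
\]
All constants depend only on $d,\nu,\kappa,K,A,\delta$ and not on $r$ or $z_0$.

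To conclude, write
\[
 \avg{e^{\Lambda v^\rho}}{Q}=1+\int_0^\infty\Lambda\rho\lambda^{\rho-1}e^{\Lambda\lambda^\rho}\,\frac{|\{v>\lambda\}\cap Q|}{|Q|}\,d\lambda .
\]
Bound the distribution function by $1$ for $\lambda\le\lambda_0$ and by \eqref{eq:introtail} for $\lambda>\lambda_0$. Since $\rho<\beta_\delta$, we have $\lambda^{\beta_\delta}/(\log\lambda)^{P/2}\gg\Lambda\lambda^\rho$, so the integral converges to a finite $E$. Here $\lambda_0$ depends only on the listed parameters, and the dependence on $\delta$ is absorbed into the dependence on $\rho$.

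The main obstacle is the bookkeeping in the second paragraph. First, $m$ must be real-valued and continuous in $\lambda$; this is allowed, since Proposition~\ref{prop:decomp} holds for every real $m\ge2$. Second, the cutoff $\lambda_0$ must guarantee $m(\lambda)\ge2$. Third, the logarithmic losses from $(1+m)^P$ and $\delta^{-M}$ must not eat the margin $\beta_\delta-\rho$, and they do not because they are only polylogarithmic in $\lambda$. A minor point to check is that $Q\subset(0,T)\times\R^d$ for some $T<T_{\max}$, so that the truncated potentials coincide with $v$ on $Q$.
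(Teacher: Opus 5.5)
Your proposal is correct and follows essentially the same route as the paper. You fix $\delta$ with $\rho<\beta_\delta$, choose $m=m(\lambda)$ logarithmically so that the near-cone bound $D_{m,\delta}K$ absorbs a fixed fraction of $\lambda$, apply parabolic John--Nirenberg to $\cR_{m,\delta}$ using $\avg{\cR_{m,\delta}}{Q}\le A$, and integrate the resulting $\beta_\delta$-tail by layer cake. The differences are only in bookkeeping: you fold $K$ and $A$ into the condition defining $m$ with threshold $\lambda/2$, whereas the paper sets $D_{m,\delta}K=\lambda/4$ and imposes $\lambda\ge4K$ and $\lambda\ge4A$ separately.
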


\begin{proof}
Choose $0<\delta\le1$ so small that
\begin{equation}\label{eq:deltachoice}
 \rho<\beta_\delta
 :=1+\frac{\nu}{(1+\delta)(\kappa-\nu)}.
\end{equation}
Set
\[
 c_0=\left(\frac{\nu}{\kappa}\right)^{d/2},
 \qquad
 a_\delta=\frac{(1+\delta)(\kappa-\nu)}{4\kappa\nu},
 \qquad b=\frac1{4\kappa}.
\]
For $\lambda$ sufficiently large, choose the real number $m=m(\lambda)$
by
\begin{equation}\label{eq:mchoice}
 m^2=a_\delta^{-1}
       \log\!\left(\frac{\lambda}{4c_0K}\right).
\end{equation}
Increasing the lower threshold for $\lambda$ ensures that $m\ge2$,
$\lambda\ge4K$, and $\lambda\ge4A$.  By construction,
$D_{m,\delta}K=\lambda/4$.  Proposition~\ref{prop:decomp} and
\eqref{eq:Fbound} therefore imply
\[
 \{v>\lambda\}\cap Q
 \subset
 \left\{\cR_{m,\delta}-\avg{\cR_{m,\delta}}{Q}
                         >\frac\lambda4\right\}\cap Q,
\]
because $\avg{\cR_{m,\delta}}{Q}\le\avg{v}{Q}\le A$.

The parabolic John--Nirenberg inequality \cite{JN61,Aim88} and
\eqref{eq:decompbounds} now give
\begin{equation}\label{eq:distributionraw}
 \frac{|\{v>\lambda\}\cap Q|}{|Q|}
 \le C\exp\!\left[-c\delta^M
       \frac{(\lambda/K)e^{bm^2}}{(1+m)^P}\right].
\end{equation}
Since $b/a_\delta=\nu/((1+\delta)(\kappa-\nu))$, substitution of
\eqref{eq:mchoice} into \eqref{eq:distributionraw} yields
\begin{equation}\label{eq:distribution}
 \frac{|\{v>\lambda\}\cap Q|}{|Q|}
 \le C\exp\!\left[-c\delta^M
   \frac{(\lambda/K)^{\beta_\delta}}
        {\{\log(e+\lambda/K)\}^{P/2}}\right]
\end{equation}
for all sufficiently large $\lambda$.  Here and below the constants may
also depend on $d,\nu,\kappa,K,A,\delta$, but not on $Q$.

For a nonnegative function, layer-cake integration gives
\[
 \avg{e^{\Lambda v^\rho}}{Q}
 =1+\int_0^\infty
   \Lambda\rho\lambda^{\rho-1}e^{\Lambda\lambda^\rho}
   \frac{|\{v>\lambda\}\cap Q|}{|Q|}\,d\lambda.
\]
The integral over bounded $\lambda$ is finite.  The remaining integral
converges by \eqref{eq:distribution}.  Indeed, choose $\sigma$ with
\[
 \rho<\sigma<\beta_\delta.
\]
For all sufficiently large $\lambda$,
\[
 \frac{(\lambda/K)^{\beta_\delta}}
      {\{\log(e+\lambda/K)\}^{P/2}}
 \ge c(\lambda/K)^\sigma.
\]
The resulting decay $C\exp[-c(\lambda/K)^\sigma]$ dominates
$e^{\Lambda\lambda^\rho}$ for every finite $\Lambda$.  This proves
\eqref{eq:expmoment}.
\end{proof}

\begin{lemma}[Interior heat estimate]\label{lem:interior-heat}
Let $p>(d+2)/2$, and let
$w\in C^{2,1}(Q^-_r(t_0,x_0))$ satisfy
\[
 w_t-\kappa\Delta w=f
 \qquad\text{in }Q^-_r(t_0,x_0),
\]
with $w,f\in L^p(Q^-_r(t_0,x_0))$.  Then
\begin{equation}\label{eq:interior-heat}
 \|w\|_{L^\infty(Q^-_{r/2}(t_0,x_0))}
 \le C(d,p,\kappa)\left[
  \left(\avg{|w|^p}{Q^-_r(t_0,x_0)}\right)^{1/p}
  +r^2\left(\avg{|f|^p}{Q^-_r(t_0,x_0)}\right)^{1/p}
 \right].
\end{equation}
\end{lemma}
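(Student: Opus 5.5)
We first reduce to a normalized cylinder by parabolic scaling and translation. Setting $\tilde w(s,y)=w(t_0+r^2s,x_0+ry)$ and $\tilde f(s,y)=r^2f(t_0+r^2s,x_0+ry)$, the function $\tilde w$ solves $\tilde w_s-\kappa\Delta\tilde w=\tilde f$ in $Q^-_1(0,0)$. The averages are scale invariant: $\avg{|\tilde w|^p}{Q^-_1}=\avg{|w|^p}{Q^-_r}$ and $\avg{|\tilde f|^p}{Q^-_1}^{1/p}=r^2\avg{|f|^p}{Q^-_r}^{1/p}$. Thus \eqref{eq:interior-heat} reduces to the case $r=1$, $z_0=0$, with a constant depending only on $d,p,\kappa$. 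A further rescaling of $y$ by $\sqrt\kappa$ would set $\kappa=1$, but it is simpler to keep $\kappa$ and let constants depend on it.

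On $Q^-_1$ we localize with a cutoff. We use a nested family of cylinders $Q^-_{\rho_j}$ with $1/2\le\rho_{j+1}<\rho_j\le1$ and cutoffs $\eta_j\in C^\infty$ that equal $1$ on $Q^-_{\rho_{j+1}}$ and vanish near the parabolic boundary of $Q^-_{\rho_j}$, that is, near the lateral boundary and the bottom time slice. Since $w\in C^{2,1}$ in the open cylinder, $\eta_jw$ is compactly supported in space and vanishes at the initial time. It solves
\[
 (\partial_t-\kappa\Delta)(\eta_jw)=\eta_jf+w(\partial_t-\kappa\Delta)\eta_j-2\kappa\nabla\eta_j\cdot\nabla w .
\]
Duhamel's formula with the heat kernel $G_\kappa$ represents $\eta_jw$ as $G_\kappa$ convolved with the right-hand side. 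To avoid $\nabla w$, we rewrite $-2\kappa\nabla\eta_j\cdot\nabla w=-2\kappa\nabla\cdot(w\nabla\eta_j)+2\kappa w\Delta\eta_j$ and move the divergence onto the kernel, which produces $\nabla G_\kappa$.

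Next we apply the parabolic Young/Hölder estimates. Since $G_\kappa\in L^{p'}$ on $(0,1)\times\R^d$ exactly when $p>(d+2)/2$, we get $\|G_\kappa*(\eta_jf)\|_\infty\le C\|f\|_{L^p}$. For the terms involving $w$, the kernels $G_\kappa$ and $\nabla G_\kappa$ lie in $L^{s}$ on bounded time for $s<(d+2)/d$ and $s<(d+2)/(d+1)$ respectively. These terms therefore only improve integrability, taking $w\in L^{p_j}$ to $L^{p_{j+1}}$ with $1/p_{j+1}=1/p_j-1/(d+2)+\epsilon$.

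The main obstacle is this bootstrap. The term $\nabla G_\kappa*(w\nabla\eta_j)$ gains only a fixed amount of integrability per step, so a single step cannot reach $L^\infty$ from $L^p$. I would iterate finitely many times, with the number of steps depending on $d$ and $p$, shrinking the cylinder from radius $1$ to $1/2$ in equal increments. At each step the $f$-contribution stays bounded by $C\|f\|_{L^p(Q^-_1)}$, because $p>(d+2)/2$ already puts $G_\kappa*f$ in $L^\infty$ and hence in every $L^{p_j}$. After $N(d,p)$ steps the exponent exceeds $d+2$. At that point $\nabla G_\kappa\in L^{p_N'}$, and one final step gives the $L^\infty$ bound on $Q^-_{1/2}$. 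The result is \eqref{eq:interior-heat} at unit scale, and undoing the scaling completes the argument.

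Alternatively, one could quote the standard local maximum principle (De Giorgi–Nash–Moser, e.g.\ Lieberman, Thm.~6.17) for $w_t-\kappa\Delta w=f$ with $f\in L^p$, $p>(d+2)/2$, and obtain \eqref{eq:interior-heat} after scaling. The cutoff iteration above gives a self-contained proof of that estimate.
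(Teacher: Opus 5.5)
Your argument is correct, and its skeleton matches the paper's. Both rescale to $Q^-_1$, localize with a cutoff, and use Duhamel's formula with $G_\kappa$. Both rewrite $-2\kappa\nabla\eta\cdot\nabla w$ in divergence form, and both handle the source $\eta f$ by noting that $G_\kappa\in L^{p'}$ on bounded times exactly when $p>(d+2)/2$.

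The difference is in the commutator terms. The paper uses a single cutoff $\chi=\alpha(t)\beta(x)$ with $\alpha=1$ on $[-1/2,0]$ and $\beta=1$ on $B_{3/4}$, but evaluates only on $Q^-_{1/2}$. Then $\partial_t\chi$ is supported at least $1/4$ earlier in time than any evaluation point. Likewise, $\nabla\chi$ and $\Delta\chi$ are supported at spatial distance at least $1/4$ from $B_{1/2}$. On these pairs of points $G_\kappa$ and $\nabla G_\kappa$ are bounded. Every $w$-term is therefore bounded by $C\|w\|_{L^1(Q^-_1)}$ in a single step, and no gain of integrability is needed.

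The bootstrap you call the main obstacle comes from evaluating on all of $\{\eta_j=1\}$, where the derivatives of $\eta_j$ may touch the evaluation region. Leaving a gap between $\{\eta=1\}$ and the evaluation cylinder removes it.

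The paper's route is shorter, and it shows that only $\avg{|w|}{Q}$ is really needed on the right-hand side. Yours uses only the global $L^s$-integrability of $G_\kappa$ and $\nabla G_\kappa$, not their off-diagonal pointwise bounds.

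Incidentally, since $1/p<2/(d+2)$, one Young step with $\epsilon<2/(d+2)-1/p$ already gives $p_1>d+2$. Your iteration therefore consists of just two applications.
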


\begin{proof}
By parabolic scaling and translation, it suffices to work in
$Q^-_1=(-1,0)\times B_1$.  Choose a smooth cutoff
$\chi(t,x)=\alpha(t)\beta(x)$ such that $\chi=1$ on $Q^-_{1/2}$,
$\alpha=0$ near $t=-1$, $\alpha=1$ on $[-1/2,0]$, $\beta=1$ on
$B_{3/4}$, and $\beta=0$ outside $B_{7/8}$.  For $W=\chi w$,
\[
 (\partial_t-\kappa\Delta)W
 =\chi f+w(\chi_t+\kappa\Delta\chi)
    -2\kappa\operatorname{div}(w\nabla\chi).
\]
Represent $W$ from time $-1$ by the heat kernel.  The spacetime kernel
$\one_{\{s<t\}}G_\kappa(t-s,x-y)$ belongs locally to $L^{p'}$ precisely
when $p>(d+2)/2$.  The terms containing derivatives of $\chi$ are easier:
for $(t,x)\in Q^-_{1/2}$, the support of $\alpha'$ is separated in time
and the supports of $\nabla\beta$ and $\Delta\beta$ are separated in space.
The corresponding heat kernel and gradient heat kernel are therefore
bounded.  H\"older's inequality gives
\[
 \|w\|_{L^\infty(Q^-_{1/2})}
 \le C(d,p,\kappa)
       \bigl(\|w\|_{L^p(Q^-_1)}+\|f\|_{L^p(Q^-_1)}\bigr).
\]
Scaling back to radius $r$ proves \eqref{eq:interior-heat}.
\end{proof}

\begin{corollary}[Local pointwise lifting]\label{cor:lifting}
Assume \eqref{eq:growthassumption} with $0<\rho<\rho_*$.  Fix
$A\ge0$ and $0<R<\infty$.  There exists
\[
 M_R(A)=M(d,\nu,\kappa,K,c_2,Z,\rho,A,R)<\infty
\]
such that, whenever
\[
 0<r\le R,\qquad r^2\le t_0\le T,\qquad
 \avg{v}{Q^-_r(t_0,x_0)}\le A,
\]
one has
\begin{equation}\label{eq:pointbound}
                         v(t_0,x_0)\le M_R(A).
\end{equation}
In particular, $M_R(A)$ is independent of $r$, $t_0$, and $x_0$.
\end{corollary}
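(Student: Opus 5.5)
The plan is to combine Proposition~\ref{prop:expmoment} with Lemma~\ref{lem:interior-heat}, applied to $w=v$ on $Q=Q^-_r(t_0,x_0)$ with source $f=q=u\,g(v)$. The hypothesis $r^2\le t_0$ places $Q\subset(0,\infty)\times\R^d$, and $t_0\le T$ places $Q$ inside the finite horizon of Section~2. There $v$ is classical by Lemma~\ref{lem:local}, $v\ge0$, and $v_t-\kappa\Delta v=q$. Fix $p=(d+4)/2>(d+2)/2$, which depends only on $d$.

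First we bound both $L^p$ averages. Since $\avg{v}{Q}\le A$, Proposition~\ref{prop:expmoment} with $\Lambda=pZ$ gives
\[
 \avg{e^{pZv^\rho}}{Q}\le E(d,\nu,\kappa,K,A,\rho,pZ).
\]
By \eqref{eq:Fbound} and \eqref{eq:growthassumption}, $0\le q\le Kc_2e^{Zv^\rho}$. Hence
\[
 \avg{q^p}{Q}\le (Kc_2)^pE.
\]
For $v$ itself, the elementary inequality $s^p\le C(p,Z,\rho)e^{pZs^\rho}$ for $s\ge0$ gives $\avg{v^p}{Q}\le CE$. Alternatively, one can apply Proposition~\ref{prop:expmoment} with $\Lambda=1$ and use $s^p\le C_{p,\rho}e^{s^\rho}$.

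Next we apply Lemma~\ref{lem:interior-heat}. It yields
\[
 v(t_0,x_0)\le\|v\|_{L^\infty(Q^-_{r/2})}\le C(d,p,\kappa)\bigl[(CE)^{1/p}+R^2Kc_2E^{1/p}\bigr]=:M_R(A),
\]
using $r\le R$. The right side involves only $d,\nu,\kappa,K,c_2,Z,\rho,A,R$, since $E$ is uniform in the radius and location of $Q$ by Proposition~\ref{prop:expmoment}.

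The step needing care is the pointwise evaluation at the top point $(t_0,x_0)$, which lies on the boundary of the open backward cylinder. We handle it by continuity: $v$ is continuous for positive time, so the supremum over $Q^-_{r/2}$ controls $v(t_0,x_0)$ as a limit from inside. If $t_0=T$ is a concern, we may shrink slightly or use continuity on $(0,T]$.

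A second point is that Lemma~\ref{lem:interior-heat} requires $w\in C^{2,1}$ on the cylinder with $w,f\in L^p$. This holds because $v$ is classical and bounded on $[r^2\cdot0,T]$ compacts away from $t=0$ by Lemma~\ref{lem:local}. The finiteness of the $L^p$ norms is in any case provided quantitatively above.

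Otherwise the argument is bookkeeping. The main substantive input, the superexponential moment bound, is already supplied by Proposition~\ref{prop:expmoment}.
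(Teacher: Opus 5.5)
Your proposal is correct and follows essentially the same route as the paper. Both arguments take $L^p$ bounds for $v$ and $q=ug(v)$ from Proposition~\ref{prop:expmoment} with $\Lambda=pZ$, apply Lemma~\ref{lem:interior-heat} on $Q^-_r(t_0,x_0)$, and use positive-time continuity to reach the top point $(t_0,x_0)$, with $r\le R$ giving the uniform constant.
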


\begin{proof}
Let $Q=Q^-_r(t_0,x_0)$ and choose
\[
 p>\frac{d+2}{2},
 \qquad \Lambda=pZ.
\]
Proposition~\ref{prop:expmoment}, the elementary bound
$s^p\le C_{p,\rho,Z}e^{pZs^\rho}$, and $0\le u\le K$ imply
\begin{equation}\label{eq:lpbounds}
 \begin{split}
 \left(\avg{v^p}{Q}\right)^{1/p}&\le C E^{1/p},\\
 \left(\avg{q^p}{Q}\right)^{1/p}
 &\le Kc_2\left(\avg{e^{pZv^\rho}}{Q}\right)^{1/p}
 \le Kc_2E^{1/p},
 \end{split}
\end{equation}
where $E=E(d,\nu,\kappa,K,A,\rho,\Lambda)$.  Applying
Lemma~\ref{lem:interior-heat} to $v_t-\kappa\Delta v=q$ gives
\begin{equation}\label{eq:localestimate}
 \norm{v}{L^\infty(Q^-_{r/2}(t_0,x_0))}
 \le C(d,p,\kappa)\left[
       \left(\avg{v^p}{Q}\right)^{1/p}
       +r^2\left(\avg{q^p}{Q}\right)^{1/p}\right].
\end{equation}
This estimate controls the open cylinder
$Q^-_{r/2}(t_0,x_0)$.  Letting $t\uparrow t_0$ at $x=x_0$ and using the
positive-time continuity of $v$ gives the same bound at $(t_0,x_0)$.
Substitution of \eqref{eq:lpbounds} into
\eqref{eq:localestimate} gives
\[
 v(t_0,x_0)
 \le C(d,p,\kappa)E^{1/p}
       \bigl(C_{p,\rho,Z}+r^2Kc_2\bigr)
 \le C(d,p,\kappa)E^{1/p}
       \bigl(C_{p,\rho,Z}+R^2Kc_2\bigr),
\]
where
\[
 E=E(d,\nu,\kappa,K,A,\rho,pZ).
\]
This proves the asserted uniformity for $0<r\le R$.
\end{proof}

\section{Proof of the main theorem}

The smoothed Martin--Pierre estimate, the fuel-loss lemma, and the discrete
hot--cold expansion used in this section are due to Leontev and Ryzhik
\cite[Lemmas~2.1 and~3.1; Sections~3.2 and~4]{LR26}.  We include the
relevant arguments in the notation and finite-horizon setting used here.
The new input is Corollary~\ref{cor:lifting}, which replaces the
subexponential local estimate \cite[Proposition~2.4]{LR26}.

\begin{lemma}[Smoothed Martin--Pierre estimate]
\label{lem:smoothed-mp}
Let $u,q\ge0$ satisfy $u_t=\nu\Delta u-q$ with $0<\nu\le\kappa$, and
define
\[
 \widetilde H(t_1,t_2)
 :=\int_{t_2}^{t_1}S_\kappa(t_1-s)q(s)\,ds,
 \qquad 0\le t_2\le t_1.
\]
Then, for every $\tau>0$ and $t\ge\tau$,
\begin{equation}\label{eq:smoothed-mp}
 S_\kappa(\tau)\widetilde H(t,t-\tau)
 \le M_{\kappa,\nu}S_\kappa(2\tau)u(t-\tau),
 \qquad
 M_{\kappa,\nu}
 =\left(\frac{2\kappa-\nu}{\kappa}\right)^{d/2}.
\end{equation}
\end{lemma}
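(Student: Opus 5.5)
The plan is to compare heat kernels pointwise in time, after using the Duhamel identity for $u$ on the window $[t-\tau,t]$. Since $u\ge0$ and $u_t=\nu\Delta u-q$, Duhamel's formula started at time $t-\tau$ gives
\[
 0\le u(t)=S_\nu(\tau)u(t-\tau)-\int_{t-\tau}^tS_\nu(t-s)q(s)\,ds .
\]
Hence the fuel-side accumulated source satisfies
\begin{equation*}
 \int_{t-\tau}^tS_\nu(t-s)q(s)\,ds\le S_\nu(\tau)u(t-\tau).
\end{equation*}
This is the only place the equation for $u$ enters. In the finite-horizon setting of Section~2 the same identity holds for $u^{(T)},q^{(T)}$, so no extra care is needed there.

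Next I would rewrite the left side of \eqref{eq:smoothed-mp} through this $\nu$-potential. Write $P_\theta$ for the unit-diffusivity heat semigroup, so that $S_a(s)=P_{as}$. By the semigroup property,
\[
 S_\kappa(\tau)\widetilde H(t,t-\tau)=\int_{t-\tau}^tP_{\kappa(\tau+\sigma)}q(s)\,ds,\qquad \sigma=t-s\in(0,\tau).
\]
Because $\kappa\ge\nu$, the remainder $\theta(\sigma):=\kappa(\tau+\sigma)-\nu\sigma=\kappa\tau+(\kappa-\nu)\sigma$ is positive, and it lies in $[\kappa\tau,(2\kappa-\nu)\tau]$. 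This gives the factorization
\[
 P_{\kappa(\tau+\sigma)}=P_{\theta(\sigma)}S_\nu(\sigma).
\]
The remainder time still depends on $s$, so it cannot be pulled out of the integral directly. That is the key point to handle.

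To remove the $s$-dependence, I would use the elementary pointwise kernel comparison: for $0<\theta\le\theta_2$,
\[
 (4\pi\theta)^{-d/2}e^{-|x|^2/(4\theta)}\le(\theta_2/\theta)^{d/2}(4\pi\theta_2)^{-d/2}e^{-|x|^2/(4\theta_2)} .
\]
Take $\theta_2=(2\kappa-\nu)\tau$. Since $\theta\ge\kappa\tau$, this gives $P_{\theta(\sigma)}f\le M_{\kappa,\nu}P_{\theta_2}f$ for every $f\ge0$, with exactly $M_{\kappa,\nu}=((2\kappa-\nu)/\kappa)^{d/2}$. Applying this with $f=S_\nu(t-s)q(s)\ge0$ and integrating in $s$ yields
\[
 S_\kappa(\tau)\widetilde H(t,t-\tau)\le M_{\kappa,\nu}P_{\theta_2}\int_{t-\tau}^tS_\nu(t-s)q(s)\,ds\le M_{\kappa,\nu}P_{\theta_2}S_\nu(\tau)u(t-\tau).
\]
The second inequality uses the Duhamel bound above and positivity of $P_{\theta_2}$.

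Finally, $P_{\theta_2}S_\nu(\tau)=P_{(2\kappa-\nu)\tau+\nu\tau}=P_{2\kappa\tau}=S_\kappa(2\tau)$, which gives \eqref{eq:smoothed-mp}. Exchanging the $s$-integral with the semigroups is justified by nonnegativity (Tonelli), since $q$ is bounded on compact time intervals. The main obstacle is the variable remainder time $\theta(\sigma)$. The uniform comparison over $[\kappa\tau,(2\kappa-\nu)\tau]$ is what resolves it, and it is exactly what produces the constant $M_{\kappa,\nu}$.
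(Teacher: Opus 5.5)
Your proposal is correct and follows the paper's proof essentially step for step. It uses the same factorization $S_\kappa(\tau+t-s)=S_1(a_s)S_\nu(t-s)$ with $a_s\in[\kappa\tau,(2\kappa-\nu)\tau]$, the same Gaussian kernel comparison that yields $M_{\kappa,\nu}$, the Duhamel bound for $u$ on $[t-\tau,t]$, and the closing identity $S_1((2\kappa-\nu)\tau)S_\nu(\tau)=S_\kappa(2\tau)$.
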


The proof below is reproduced from \cite[Lemma~2.1]{LR26}.

\begin{proof}
By the semigroup property, we see
\begin{equation}\label{eq:mp-first}
 S_\kappa(\tau)\widetilde H(t,t-\tau)
 =\int_{t-\tau}^{t}S_\kappa(\tau+t-s)q(s)\,ds.
\end{equation}
For $s\in[t-\tau,t]$, let
\[
 a_s:=\kappa(\tau+t-s)-\nu(t-s),
 \qquad A:=(2\kappa-\nu)\tau.
\]
Since $0<\nu\le\kappa$,
\[
 \kappa\tau\le a_s\le A,
 \qquad
 S_\kappa(\tau+t-s)=S_1(a_s)S_\nu(t-s).
\]
For every nonnegative function $f$, comparison of the Gaussian kernels
at times $a_s$ and $A$ gives
\[
 S_1(a_s)f
 \le\left(\frac{A}{a_s}\right)^{d/2}S_1(A)f
 \le M_{\kappa,\nu}S_1(A)f.
\]
Applying this inequality in \eqref{eq:mp-first} yields
\begin{equation}\label{eq:mp-second}
 S_\kappa(\tau)\widetilde H(t,t-\tau)
 \le M_{\kappa,\nu}S_1(A)
 \int_{t-\tau}^{t}S_\nu(t-s)q(s)\,ds.
\end{equation}
The Duhamel formula for the fuel equation gives
\[
 u(t)=S_\nu(\tau)u(t-\tau)
      -\int_{t-\tau}^{t}S_\nu(t-s)q(s)\,ds.
\]
Since $u\ge0$, the integral in \eqref{eq:mp-second} is bounded above by
$S_\nu(\tau)u(t-\tau)$.  Finally, we have
\[
 S_1(A)S_\nu(\tau)
 =S_1(A+\nu\tau)
 =S_1(2\kappa\tau)
 =S_\kappa(2\tau),
\]
which proves \eqref{eq:smoothed-mp}.
\end{proof}

The following fuel-loss estimate is due to Leontev and Ryzhik
\cite[Lemma~3.1]{LR26}.  We reproduce the argument of
\cite[Section~4]{LR26} to show the dependence of the constants; it
requires only boundedness on compact time intervals.

\begin{lemma}[Fuel loss in a predominantly ignited cylinder]
\label{lem:fuel-loss}
Assume $0<\nu<\kappa$, let $(u,v)$ be a nonnegative solution of
\eqref{eq:system} that is bounded on finite time intervals, and suppose
that $g\in C([0,\infty))$ and $g\ge0$.  Fix $\vartheta>0$ such that
\[
 h_\vartheta:=\inf_{s>\vartheta}g(s)>0.
\]
For every $\eta\in(0,1)$ there exist
\[
 \tau_\eta=\tau_\eta(d,\nu,\kappa,\eta,h_\vartheta)>0,
 \qquad
 \varepsilon_\eta=\varepsilon_\eta(d,\nu,\kappa,\eta)>0
\]
with the following property.  If $\ell_\eta=\sqrt{\tau_\eta}$,
$t\ge\tau_\eta$, and
\begin{equation}\label{eq:fuel-loss-hyp}
 \frac{\bigl|\{v<\vartheta\}\cap
 Q^-_{\ell_\eta}(t,x)\bigr|}
 {|Q^-_{\ell_\eta}(t,x)|}
 \le \varepsilon_\eta,
\end{equation}
then
\begin{equation}\label{eq:fuel-loss-conclusion}
 u(t,x)
 \le\eta\,[S_\kappa(\tau_\eta)u(t-\tau_\eta)](x).
\end{equation}
\end{lemma}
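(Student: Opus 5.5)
The plan is to follow a Lagrangian/Feynman--Kac picture for the fuel equation and to compare it with the heat semigroup at the larger diffusivity $\kappa$. Set $\tau=\tau_\eta$ and $\ell=\sqrt\tau$. On $[t-\tau,t]$ the fuel satisfies
\[
 u_t=\nu\Delta u-g(v)u .
\]
Feynman--Kac for Brownian motion $X_s$ with generator $\nu\Delta$, started at $x$ and run backward from time $t$, gives
\[
 u(t,x)=\mathbb E\Bigl[u(t-\tau,X_\tau)\exp\Bigl(-\int_0^\tau g(v(t-s,X_s))\,ds\Bigr)\Bigr].
\]
This representation is valid because $v$ is bounded on finite intervals, so $g(v)$ is bounded. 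Since $g(v)\ge h_\vartheta\one_{\{v>\vartheta\}}$, the exponential is at most
\[
 \exp\bigl(-h_\vartheta\,|\{s\in(0,\tau):v(t-s,X_s)>\vartheta\}|\bigr).
\]

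Next, split the expectation using the occupation time
\[
 \Theta:=|\{s\le\tau:\ v(t-s,X_s)\le\vartheta\}| .
\]
On the event $\{\Theta\le\tau/2,\ \sup_{s\le\tau}|X_s-x|<\ell\}$ the weight is at most $e^{-h_\vartheta\tau/2}$. We bound the contribution by this weight times $\mathbb E[u(t-\tau,X_\tau)]=S_\nu(\tau)u(t-\tau)(x)$. Then the kernel comparison $G_\nu(\tau,\cdot)\le(\kappa/\nu)^{d/2}G_\kappa(\tau,\cdot)$ converts this into $(\kappa/\nu)^{d/2}e^{-h_\vartheta\tau/2}S_\kappa(\tau)u(t-\tau)(x)$. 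Choosing $\tau_\eta$ large, depending on $h_\vartheta$ and $\eta$, makes this term at most $\tfrac\eta3 S_\kappa(\tau)u(t-\tau)(x)$.

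Two bad events remain; for each we need a bound of the form $\mathbb E[u(t-\tau,X_\tau)\one_{\mathrm{bad}}]\le c\,S_\kappa(\tau)u(t-\tau)(x)$ with small $c$. Here the gap $\kappa>\nu$ is essential. By Cauchy--Schwarz, or more precisely by conditioning on $X_\tau=y$, the endpoint density is $G_\nu(\tau,x-y)$, which is dominated by $(\kappa/\nu)^{d/2}G_\kappa(\tau,x-y)$. It therefore suffices to show that the bad events have small probability under every Brownian bridge measure, uniformly in the endpoint $y$. This is too strong for bridges to far endpoints, so we handle the endpoint $y$ directly instead.
\begin{itemize}
\item For $|x-y|\ge R\ell$, use the Gaussian ratio $G_\nu/G_\kappa\le(\kappa/\nu)^{d/2}e^{-c|x-y|^2/\tau}$ with $c=(\kappa-\nu)/(4\kappa\nu)$. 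This is at most $\eta/3$ once $R=R(d,\nu,\kappa,\eta)$ is large, and the confinement condition can then be dropped.
\item For $|x-y|<R\ell$, the bridge stays in $B_{2R\ell}$ with high probability. The expected bad occupation time is
\[
 \mathbb E\,\Theta=\int_0^\tau\mathbb P\bigl(v(t-s,X_s)\le\vartheta\bigr)\,ds ,
\]
which is bounded by $\int\!\!\int p(s,z)\one_{\{v\le\vartheta\}}$. The bridge density $p$ is bounded by $C(d,\nu,\kappa,R)\ell^{-d}$ except near $s=0,\tau$, and the exceptional times contribute at most a small fraction of $\tau$. Hence $\mathbb E\,\Theta\le C_R\,\tau\,\varepsilon_\eta'+\tau/100$, where $\varepsilon_\eta'$ is the bad density in the enlarged cylinder $Q^-_{2R\ell}$.
\end{itemize}

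Markov's inequality then gives $\mathbb P(\Theta>\tau/2)$ small. To make the enlarged cylinder match the hypothesis, it is cleaner to take the cylinder in \eqref{eq:fuel-loss-hyp} as fixed and to absorb $R$ by setting $\tau_\eta=\ell_\eta^2$ with Brownian confinement at scale $\ell_\eta$. This replaces the restriction $|x-y|<R\ell$ by a slightly different rescaling, or alternatively uses the freedom $\varepsilon_\eta\ll R^{-(d+2)}$. In either case $\varepsilon_\eta$ depends only on $d,\nu,\kappa,\eta$, while $\tau_\eta$ also depends on $h_\vartheta$.

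The main obstacle is making this bookkeeping consistent. The occupation bound must hold uniformly for bridges, the cylinder size $\ell_\eta=\sqrt{\tau_\eta}$ must be matched exactly with the scale where the bridge lives, and the constants must be kept free of $h_\vartheta$ in $\varepsilon_\eta$. I would first try a direct PDE version instead of bridges. Write $u\le w$, where $w$ solves $w_s=\nu\Delta w-h_\vartheta\one_{\{v>\vartheta\}}w$, and estimate the ratio of $w$ to $S_\nu(\tau)u(t-\tau)$ via a Harnack-type averaging. If that becomes messy, I would fall back to the probabilistic occupation-time argument above.
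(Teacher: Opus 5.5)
\textbf{There is a genuine gap in the near-endpoint step.} It is the bookkeeping problem you flag but do not resolve. The hypothesis constrains $v$ only on $Q^-_\ell(t,x)=(t-\tau,t)\times B_\ell(x)$; outside $B_\ell(x)$ one may have $v<\vartheta$ everywhere. Your threshold $\Theta\le\tau/2$ over the whole interval $[0,\tau]$ then fails with probability close to one for some of the relevant bridges.

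To see this, note that your $R=R(d,\nu,\kappa,\eta)\to\infty$ as $\eta\to0$. Take an endpoint with $|x-y|=R\ell/2$, where $R$ is the smallest admissible radius. There $G_\nu(\tau,x-y)/G_\kappa(\tau,x-y)=(\kappa/\nu)^{3d/8}(\eta/3)^{1/4}\gg\eta$. Yet the bridge mean $x+\frac{s}{\tau}(y-x)$ leaves $B_\ell(x)$ after a fraction $2/R$ of the time, and the fluctuations are $O(\sqrt{\nu}\,\ell)$. So the bridge spends in expectation only a fraction $O_{d,\nu}(1/R)$ of $[0,\tau]$ in $B_\ell(x)$. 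Hence $\mathbb P(\Theta>\tau/2)\to1$, and the contribution of that $y$ is not $O(\eta)\,G_\kappa(\tau,x-y)$.

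Your Markov bound needs the cold density on $Q^-_{2R\ell}$, which is not assumed. The patch $\varepsilon_\eta\ll R^{-(d+2)}$ runs the wrong way: since $Q^-_\ell\subset Q^-_{2R\ell}$ and $|Q^-_{2R\ell}|=(2R)^{d+2}|Q^-_\ell|$, smallness on the large cylinder gives smallness on the small one, not conversely. Likewise, confinement to $B_\ell(x)$ for the full time $\ell^2$ fails with a probability that, by scaling, depends only on $d$ and $\nu$. No choice of $\tau$ makes it small, and it is not $\le\eta/3$ for small $\eta$.

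\textbf{The missing idea is localization in time near $t$,} where the backward path starts at $x$. The paper uses only the window $(t-2\delta\tau,t-\delta\tau)$, with $\delta$ small depending on $\eta$. For $|x-y|<L\ell$, the bridge there has mean within $2\delta L\ell\le\ell/2$ of $x$ and variance parameter $\sigma_s\in[\frac12\nu\delta\tau,2\nu\delta\tau]$. It therefore lies outside $B_\ell(x)$ with probability at most $Ce^{-c/\delta}$. Its density is at most $C(\delta\tau)^{-d/2}$, so the expected cold time in the window is at most $C\varepsilon\delta^{-d/2}\tau$, i.e.\ a fraction $C\varepsilon\delta^{-d/2-1}$ of the window. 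Require only $\delta\tau/2$ units of hot time inside the window, choose $\varepsilon_\eta\ll\delta^{d/2+1}$, and then $\tau_\eta\gg(h_\vartheta\delta)^{-1}\log(1/\eta)$. This closes the argument with exactly the stated dependences.

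The paper carries this out analytically through the Duhamel inequality $G_\nu-\Gamma\ge\int\!\!\int\Gamma\,a\,\Gamma$ restricted to that window. For near endpoints it obtains $\Gamma\le C_0\bigl((h_\vartheta\delta\tau)^{-1}+\varepsilon\delta^{-d/2-1}+e^{-c_0/\delta}\bigr)G_\kappa$. With this change, your Feynman--Kac setup, the far-endpoint Gaussian-ratio step, and the good-event bound all go through.

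\textbf{A minor point:} the hypothesis controls $\{v<\vartheta\}$, not $\{v\le\vartheta\}$. You should use $g(\vartheta)\ge h_\vartheta$, which follows from continuity of $g$, and count cold time with the strict inequality.
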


\begin{proof}
Set $a(s,z)=g(v(s,z))$.  On every finite time interval, $a$ is bounded
by the assumed boundedness of $v$ and the continuity of $g$.  Let
$\Gamma(t,x;s,y)$ be the evolution kernel defined by the Feynman--Kac
formula for the bounded measurable nonnegative potential $a$ in
\[
 \partial_t w-\nu\Delta w+a(t,x)w=0.
\]
The Feynman--Kac representation gives the Chapman--Kolmogorov identity,
the representation of $u$ used below, and the domination
\begin{equation}\label{eq:Gamma-free}
 0\le\Gamma(t,x;s,y)\le G_\nu(t-s,x-y).
\end{equation}
We shall choose $\tau=\tau_\eta$ and prove
\begin{equation}\label{eq:Gamma-target}
 \Gamma(t,x;t-\tau,y)
 \le\eta G_\kappa(\tau,x-y)
 \qquad\text{for every }y\in\R^d.
\end{equation}
The conclusion then follows from the representation
\[
 u(t,x)=\int_{\R^d}\Gamma(t,x;t-\tau,y)
 u(t-\tau,y)\,dy.
\]

Choose $L=L_\eta\ge1$ so large that
\begin{equation}\label{eq:L-choice}
 \left(\frac{\kappa}{\nu}\right)^{d/2}
 \exp\!\left(-\frac{\kappa-\nu}{4\kappa\nu}L^2\right)
 \le\eta.
\end{equation}
If $|x-y|\ge L\sqrt\tau$, then \eqref{eq:Gamma-free} and
\[
 \frac{G_\nu(\tau,x-y)}{G_\kappa(\tau,x-y)}
 =\left(\frac{\kappa}{\nu}\right)^{d/2}
 \exp\!\left(-\frac{\kappa-\nu}{4\kappa\nu}
 \frac{|x-y|^2}{\tau}\right)
\]
imply \eqref{eq:Gamma-target}.  It remains to consider
$|x-y|<L\sqrt\tau$. Let $\ell=\sqrt\tau$ and take
\begin{equation}\label{eq:delta-geometric}
 0<\delta<\min\left\{\frac14,\frac{1}{4L}\right\}.
\end{equation}
Duhamel's formula, \eqref{eq:Gamma-free}, and the semigroup property of
$\Gamma$ give
\begin{align}
&G_\nu(\tau,x-y)-\Gamma(t,x;t-\tau,y) \notag\\
&\quad=\int_{t-\tau}^{t}\!\int_{\R^d}
 G_\nu(t-s,x-z)a(s,z)\Gamma(s,z;t-\tau,y)\,dz\,ds \notag\\
&\quad\ge\int_{t-\tau}^{t}\!\int_{\R^d}
 \Gamma(t,x;s,z)a(s,z)\Gamma(s,z;t-\tau,y)\,dz\,ds.
 \label{eq:Gamma-Duhamel}
\end{align}
Since $g$ is continuous, the definition of $h_\vartheta$ also implies
$g(\vartheta)\ge h_\vartheta$.  Restricting the last integral in
\eqref{eq:Gamma-Duhamel} to
\[
 (t-2\delta\tau,t-\delta\tau)\times
 \bigl(B_\ell(x)\cap\{v\ge\vartheta\}\bigr)
\]
therefore yields
\begin{equation}\label{eq:Gamma-loss}
 G_\nu(\tau,x-y)-\Gamma(t,x;t-\tau,y)
 \ge h_\vartheta\delta\tau\,\Gamma(t,x;t-\tau,y)
      -h_\vartheta I_{\mathrm{cold}}
      -h_\vartheta I_{\mathrm{out}},
\end{equation}
where
\begin{align*}
 I_{\mathrm{cold}}
 &=\int_{t-2\delta\tau}^{t-\delta\tau}
   \int_{B_\ell(x)\cap\{v(s,\cdot)<\vartheta\}}
   \Gamma(t,x;s,z)\Gamma(s,z;t-\tau,y)\,dz\,ds,\\
 I_{\mathrm{out}}
 &=\int_{t-2\delta\tau}^{t-\delta\tau}
   \int_{B_\ell(x)^c}
   \Gamma(t,x;s,z)\Gamma(s,z;t-\tau,y)\,dz\,ds.
\end{align*}
Indeed, for every intermediate time $s$,
\[
 \int_{\R^d}\Gamma(t,x;s,z)
 \Gamma(s,z;t-\tau,y)\,dz
 =\Gamma(t,x;t-\tau,y).
\]
For $s\in(t-2\delta\tau,t-\delta\tau)$, set
\[
 \sigma_s=\nu\frac{(t-s)(s-t+\tau)}{\tau},
 \qquad
 m_s=x+\frac{t-s}{\tau}(y-x).
\]
Completing the square gives the Gaussian-bridge identity
\begin{align}
&G_\nu(t-s,x-z)G_\nu(s-t+\tau,z-y) \notag\\
&\qquad=G_\nu(\tau,x-y)(4\pi\sigma_s)^{-d/2}
 \exp\!\left(-\frac{|z-m_s|^2}{4\sigma_s}\right).
 \label{eq:bridge-identity}
\end{align}
Writing $\alpha=(t-s)/\tau\in[\delta,2\delta]$ and using
$\delta<1/4$, we obtain
\begin{equation}\label{eq:sigma-bounds}
 \frac12\nu\delta\tau
 \le\sigma_s=\nu\alpha(1-\alpha)\tau
 \le2\nu\delta\tau.
\end{equation}
It follows from \eqref{eq:Gamma-free},
\eqref{eq:bridge-identity}, and \eqref{eq:sigma-bounds} that
\begin{align}
 I_{\mathrm{cold}}
 &\le C(\delta\tau)^{-d/2}G_\nu(\tau,x-y)
 \bigl|\{v<\vartheta\}\cap Q^-_\ell(t,x)\bigr| \notag\\
 &\le C\varepsilon\tau\delta^{-d/2}G_\nu(\tau,x-y),
 \label{eq:Icold-bound}
\end{align}
provided the fraction in \eqref{eq:fuel-loss-hyp} is at most
$\varepsilon$.  Here and below $C$ depends only on $d$ and $\nu$; we
used $|Q^-_\ell(t,x)|=\tau|B_1|\ell^d$ and $\ell^2=\tau$. Moreover, $|x-y|<L\ell$ and \eqref{eq:delta-geometric} imply
$|m_s-x|\le2\delta L\ell\le\ell/2$.  Hence
$z\in B_\ell(x)^c$ implies $|z-m_s|\ge\ell/2$.  The bounds
\eqref{eq:sigma-bounds} consequently give
\begin{align}
&\int_{B_\ell(x)^c}(4\pi\sigma_s)^{-d/2}
 \exp\!\left(-\frac{|z-m_s|^2}{4\sigma_s}\right)dz \notag\\
&\qquad\le C(\delta\ell^2)^{-d/2}
 \int_{|w|\ge\ell/2}
 \exp\!\left(-\frac{|w|^2}{8\nu\delta\ell^2}\right)dw
 \le Ce^{-c/\delta},
 \label{eq:bridge-tail}
\end{align}
where $C,c>0$ depend only on $d$ and $\nu$.  Integrating over the time
interval of length $\delta\tau$ yields
\begin{equation}\label{eq:Iout-bound}
 I_{\mathrm{out}}
 \le C\delta\tau e^{-c/\delta}G_\nu(\tau,x-y).
\end{equation}
Combining \eqref{eq:Gamma-loss}, \eqref{eq:Icold-bound}, and
\eqref{eq:Iout-bound}, and using
$G_\nu(\tau,\cdot)\le(\kappa/\nu)^{d/2}G_\kappa(\tau,\cdot)$, gives
\begin{equation}\label{eq:Gamma-final-estimate}
 \Gamma(t,x;t-\tau,y)
 \le C_0\left(
 \frac{1}{h_\vartheta\delta\tau}
 +\varepsilon\delta^{-d/2-1}
 +e^{-c_0/\delta}\right)G_\kappa(\tau,x-y),
\end{equation}
where $C_0,c_0>0$ depend only on $d,\nu,\kappa$. We now fix the parameters.  After choosing $L=L_\eta$ by
\eqref{eq:L-choice}, choose $\delta=\delta_\eta$ satisfying
\eqref{eq:delta-geometric} and
\[
 C_0e^{-c_0/\delta_\eta}\le\frac\eta3.
\]
Set
\[
 \varepsilon_\eta
 =\min\left\{1,
 \frac{\eta}{3C_0}\delta_\eta^{d/2+1}\right\},
 \qquad
 \tau_\eta
 =\max\left\{1,
 \frac{3C_0}{\eta h_\vartheta\delta_\eta}\right\}.
\]
Then \eqref{eq:Gamma-final-estimate} proves
\eqref{eq:Gamma-target} when
$|x-y|<L_\eta\sqrt{\tau_\eta}$, while \eqref{eq:L-choice} proves it in
the complementary region.  Integrating \eqref{eq:Gamma-target} against
$u(t-\tau_\eta,y)\ge0$ proves \eqref{eq:fuel-loss-conclusion}.
\end{proof}

\begin{proof}[Proof of Theorem~\ref{thm:main}]
Let $[0,T_{\max})$ be the maximal lifespan supplied by
Lemma~\ref{lem:local}, and fix an arbitrary $T<T_{\max}$.  We use the
finite-horizon extensions introduced at the beginning of Section~2.  Thus
$u,v,q,F$, and $H$ below denote the extended quantities, agree with the
original solution on $[0,T]$, and satisfy $0\le u,F\le K$ on all positive
times.  Every constant below is independent of $T$.

Choose $\vartheta>0$ such that
\[
                 h_\vartheta:=\inf_{s>\vartheta}g(s)>0,
\]
and fix $\eta=1/2$.  The fuel-loss estimate supplies
$\tau=\tau_\eta>0$, $\ell=\sqrt\tau$, and
$\varepsilon_\eta>0$.  Let $C_{\mathrm B}$ be the constant in
\eqref{eq:globalBMO}.  The parabolic John--Nirenberg inequality
\cite{JN61,Aim88}
gives, whenever $\avg{v}{Q}>A>\vartheta$,
\[
 \frac{|\{v<\vartheta\}\cap Q|}{|Q|}
 \le C_{\mathrm{JN}}
    \exp\!\left(-\frac{c_{\mathrm{JN}}(A-\vartheta)}{C_{\mathrm B}}\right).
\]
Choose $A=A_\eta$ so that the last expression is at most
$\varepsilon_\eta$.  For $\tau\le s\le T$, define
\[
 U_s=\left\{x:\avg{v}{Q^-_\ell(s,x)}>A_\eta\right\},
\]
and set $U_s=\varnothing$ otherwise.  The averaging map is continuous in
$x$, so $U_s$ is open.  Lemma~\ref{lem:fuel-loss} yields
\begin{equation}\label{eq:hotfuel}
 u(s,x)\le\eta\,[S_\kappa(\tau)u(s-\tau,\cdot)](x),
 \qquad \tau\le s\le T,\quad x\in U_s.
\end{equation}
On the complement of $U_s$, Corollary~\ref{cor:lifting}, with
$M_\eta:=M_\ell(A_\eta)$, gives
\begin{equation}\label{eq:coldpoint}
                         v(s,x)\le M_\eta,
 \qquad \tau\le s\le T,\quad x\notin U_s.
\end{equation}
For $0\le s_2\le s_1$, define
\[
 \widetilde H(s_1,s_2)
 :=\int_{s_2}^{s_1}S_\kappa(s_1-s)q(s,\cdot)\,ds,
 \qquad H(s)=\widetilde H(s,0),
\]
and, for $s\ge\tau$, set
\[
                         V(s):=S_\kappa(\tau)H(s-\tau).
\]
Then $0\le V(s)\le H(s)\le v(s)$, and for $s\ge2\tau$,
\begin{equation}\label{eq:Vduhamel}
 V(s)=S_\kappa(\tau)V(s-\tau)
      +S_\kappa(\tau)\widetilde H(s-\tau,s-2\tau).
\end{equation}
The smoothed Martin--Pierre estimate is valid for every positive time
step.  With the present value of $\tau$, it states that
\begin{equation}\label{eq:MPsource}
 S_\kappa(\tau)\widetilde H(s,s-\tau)
 \le M_{\kappa,\nu}S_\kappa(2\tau)u(s-\tau),
 \qquad
 M_{\kappa,\nu}=\left(\frac{2\kappa-\nu}{\kappa}\right)^{d/2},
 \qquad s\ge\tau.
\end{equation}

Fix $3\tau<t\le T+\tau$ and set
\[
 \mathsf N=\left\lfloor\frac{t}{\tau}\right\rfloor-2,
 \qquad t_0=t-\mathsf N\tau.
\]
Then $\mathsf N\ge1$ and
\begin{equation}\label{eq:t0range}
                         2\tau\le t_0<3\tau.
\end{equation}
In particular, the earliest time used below is
$t-(\mathsf N+2)\tau=t_0-2\tau\ge0$.
Let $\one_E$ also denote multiplication by the indicator of $E$.  Define
\[
 \mathsf T_0^t=I,\qquad \mathsf F_0^t=0,
\]
and, for $0\le n\le \mathsf N+1$,
\[
 \mathsf T_{n+1}^t
 =\mathsf T_n^t\circ S_\kappa(\tau)
       \circ\one_{U_{t-(n+1)\tau}},
 \qquad
 \mathsf F_{n+1}^t
 =\mathsf T_n^t\circ S_\kappa(\tau)
       \circ\one_{U_{t-(n+1)\tau}^c}.
\]
These operators preserve positivity, have $L^\infty$ operator norm at most
one, and satisfy
\begin{equation}\label{eq:TFidentities}
 \mathsf T_n^tS_\kappa(\tau)
   =\mathsf T_{n+1}^t+\mathsf F_{n+1}^t,
 \qquad
 \mathsf F_{n+1}^t\one
   =\mathsf T_n^t\one-\mathsf T_{n+1}^t\one.
\end{equation}

Iterating \eqref{eq:Vduhamel} $\mathsf N$ times yields
\begin{align}
 V(t)
 &=\mathsf T_{\mathsf N}^tV(t_0)
   +\sum_{n=1}^{\mathsf N}\mathsf F_n^tV(t-n\tau) \notag\\
 &\quad
   +\sum_{n=0}^{\mathsf N-1}\mathsf T_n^tS_\kappa(\tau)
       \widetilde H\bigl(t-(n+1)\tau,t-(n+2)\tau\bigr).
 \label{eq:iteratedV}
\end{align}
The first term has a bound depending only on the data.  Indeed, write
$a=t_0-2\tau$ and $r=t_0-\tau$, so that $0\le a<\tau$ and
$r-a=\tau$.  The semigroup identity gives
\begin{align*}
 V(t_0)
 &=S_\kappa(2\tau)H(a)
   +S_\kappa(\tau)\widetilde H(r,a).
\end{align*}
The second term is at most $M_{\kappa,\nu}K$ by
\eqref{eq:MPsource}.  If $a>0$, the same estimate with time step $a$
shows that
\[
 S_\kappa(2\tau)H(a)
 =S_\kappa(2\tau-a)\bigl[S_\kappa(a)H(a)\bigr]
 \le M_{\kappa,\nu}K;
\]
for $a=0$ this term vanishes.  Thus
\begin{equation}\label{eq:initialV}
 \|\mathsf T_{\mathsf N}^tV(t_0)\|_{L^\infty}
 \le2M_{\kappa,\nu}K.
\end{equation}
By \eqref{eq:coldpoint}, $V\le M_\eta$ on the cold set.  Hence
\[
 \sum_{n=1}^{\mathsf N}\mathsf F_n^tV(t-n\tau)
 \le M_\eta\sum_{n=1}^{\mathsf N}\mathsf F_n^t\one
 =M_\eta\bigl(\one-\mathsf T_{\mathsf N}^t\one\bigr)
 \le M_\eta.
\]

Denote the last sum in \eqref{eq:iteratedV} by $I_3$.  Equations
\eqref{eq:MPsource} and \eqref{eq:TFidentities} give
\[
 I_3\le M_{\kappa,\nu}
       (\mathfrak B_1+\mathfrak B_2+\mathfrak B_3),
\]
where
\begin{align*}
 \mathfrak B_1
 &=\sum_{n=0}^{\mathsf N-1}
     \mathsf T_{n+2}^tu(t-(n+2)\tau)
   =\sum_{j=2}^{\mathsf N+1}\mathsf T_j^tu(t-j\tau),\\
 \mathfrak B_2
 &=\sum_{n=0}^{\mathsf N-1}
     \mathsf F_{n+2}^tu(t-(n+2)\tau),\\
 \mathfrak B_3
 &=\sum_{n=0}^{\mathsf N-1}
     \mathsf F_{n+1}^tS_\kappa(\tau)u(t-(n+2)\tau).
\end{align*}
Since $0\le u\le K$ and $S_\kappa(\tau)u\le K$,
\begin{equation}\label{eq:B23}
 \mathfrak B_2+\mathfrak B_3
 \le K\sum_{j=2}^{\mathsf N+1}\mathsf F_j^t\one
     +K\sum_{j=1}^{\mathsf N}\mathsf F_j^t\one
 \le2K.
\end{equation}

For $2\le j\le \mathsf N+1$, \eqref{eq:t0range} gives
\[
 t-j\tau\ge t_0-\tau\ge\tau,
 \qquad
 t-j\tau\le T,
 \qquad
 t-(j+1)\tau\ge t_0-2\tau\ge0.
\]
Applying \eqref{eq:hotfuel} inside the definition of
$\mathsf T_j^t$ and then using \eqref{eq:TFidentities}, we obtain
\begin{align*}
 \mathfrak B_1
 &\le\eta\sum_{j=2}^{\mathsf N+1}
      \mathsf T_j^tS_\kappa(\tau)u(t-(j+1)\tau)\\
 &=\eta\sum_{j=3}^{\mathsf N+2}
      \bigl(\mathsf T_j^t+\mathsf F_j^t\bigr)u(t-j\tau)\\
 &\le\eta\mathfrak B_1+\eta K.
\end{align*}
Indeed, $t-(\mathsf N+2)\tau=t_0-2\tau<\tau$, so
$U_{t-(\mathsf N+2)\tau}=\varnothing$ and
$\mathsf T_{\mathsf N+2}^t=0$.  The shifted $\mathsf T$-sum is therefore
bounded by $\mathfrak B_1$, while the $\mathsf F$-sum is at most $K$ by
\eqref{eq:TFidentities}.  Consequently,
\begin{equation}\label{eq:B1}
                 \mathfrak B_1\le\frac{\eta K}{1-\eta}.
\end{equation}
Combining \eqref{eq:iteratedV}, \eqref{eq:B23}, and \eqref{eq:B1} gives
\[
 \|V(t)\|_{L^\infty}
 \le 2M_{\kappa,\nu}K+M_\eta
   +M_{\kappa,\nu}\left(2K+\frac{\eta K}{1-\eta}\right),
 \qquad 3\tau<t\le T+\tau.
\]
For $\tau<s\le2\tau$, set $r=s-\tau\in(0,\tau]$.  The same estimate with
time step $r$ gives
\[
 V(s)=S_\kappa(\tau-r)\bigl[S_\kappa(r)H(r)\bigr]
 \le M_{\kappa,\nu}K,
\]
and $V(\tau)=0$.  The computation leading to \eqref{eq:initialV} applies
with $t_0=s$ for every $s\in[2\tau,3\tau]$ and gives
\[
 \|V(s)\|_{L^\infty}\le2M_{\kappa,\nu}K
\]
there.  Hence $V$ is uniformly bounded on $[\tau,T+\tau]$.  Since
\[
 S_\kappa(\tau)v(s)
 =S_\kappa(s+\tau)v_0+V(s+\tau)
 \le K+V(s+\tau),
\]
we have
\begin{equation}\label{eq:smoothed}
 \sup_{0\le s\le T,\,x\in\R^d}
       [S_\kappa(\tau)v(s,\cdot)](x)<\infty.
\end{equation}

Set
\[
 c_\tau:=\inf_{|z|\le1}G_\kappa(\tau,z)>0.
\]
It follows from \eqref{eq:smoothed} that
\[
 \sup_{0\le s\le T,\,x\in\R^d}\int_{B_1(x)}v(s,y)\,dy
 \le c_\tau^{-1}
 \sup_{0\le s\le T,\,x\in\R^d}
       [S_\kappa(\tau)v(s,\cdot)](x)<\infty.
\]
Integration over $s\in(t-1,t)$ gives
\begin{equation}\label{eq:uniformmeans}
 \sup_{1\le t\le T,\,x\in\R^d}\avg{v}{Q^-_1(t,x)}<\infty.
\end{equation}
Corollary~\ref{cor:lifting} applied to \eqref{eq:uniformmeans} bounds
$v(t,x)$ uniformly for $1\le t\le T$.

The same structural constants also control the initial time interval.
Fix $0<t_0\le\min\{1,T\}$ and $x_0\in\R^d$, and set
\[
                 r^2=\frac{t_0}{2},
 \qquad Q=Q^-_r(t_0,x_0)
          =(r^2,2r^2)\times B_r(x_0).
\]
For $s\in[r^2,2r^2]$, let $a=s-r^2\in[0,r^2]$.  The Duhamel identity
gives
\[
 S_\kappa(r^2)H(s)
 =S_\kappa(2r^2)H(a)
   +S_\kappa(r^2)\widetilde H(s,a).
\]
The smoothed Martin--Pierre estimate, first with time step $r^2$ and then
with time step $a$, yields
\[
 S_\kappa(r^2)\widetilde H(s,a)
 \le M_{\kappa,\nu}S_\kappa(2r^2)u(a)
 \le M_{\kappa,\nu}K
\]
and, if $a>0$,
\[
 S_\kappa(2r^2)H(a)
 =S_\kappa(2r^2-a)\bigl[S_\kappa(a)H(a)\bigr]
 \le M_{\kappa,\nu}K.
\]
The latter term vanishes when $a=0$.  Consequently,
\begin{equation}\label{eq:shorttimesmoothed}
 [S_\kappa(r^2)v(s,\cdot)](x_0)
 \le (1+2M_{\kappa,\nu})K.
\end{equation}
For $y\in B_r(x_0)$,
\[
 G_\kappa(r^2,x_0-y)
 \ge (4\pi\kappa)^{-d/2}r^{-d}
       \exp\!\left(-\frac1{4\kappa}\right).
\]
It follows from \eqref{eq:shorttimesmoothed} that
\[
 \avg{v}{Q}
 \le A_0
 :=\frac{(4\pi\kappa)^{d/2}e^{1/(4\kappa)}}{|B_1|}
       (1+2M_{\kappa,\nu})K.
\]
Since $0<r\le1/\sqrt2$, Corollary~\ref{cor:lifting} with $R=1$ gives
\[
                         v(t_0,x_0)\le M_1(A_0),
\]
uniformly for $0<t_0\le\min\{1,T\}$ and $x_0\in\R^d$.  Together with
$v(0,\cdot)=v_0\le K$, this proves
\[
 \sup_{0\le t\le T}\|v(t)\|_{L^\infty}\le C,
\]
where $C$ is independent of $T<T_{\max}$.  Since $T$ was arbitrary, the
same bound holds on the maximal lifespan.  The continuation alternative
\eqref{eq:continuation} therefore gives $T_{\max}=\infty$ and completes the
proof.
\end{proof}

\section{Acknowledgment}
The author is grateful to Lenya Ryzhik for useful discussions. The author acknowledges use of AI for literature search, proof refinement, and copyediting. This work was partially supported by the Simons Foundation (No. 0007730).

\end{document}